\DeclareSymbolFontAlphabet{\mathbb}{AMSb}
\DeclareSymbolFontAlphabet{\mathbbol}{bbold}
\theoremstyle{plain}
\newtheorem{theorem}{\normalfont\scshape Theorem}[section]
\newtheorem{proposition}{\normalfont\scshape Proposition}[section]
\newtheorem{lemma}[proposition]{\normalfont\scshape Lemma}
\newtheorem{corollary}[theorem]{\normalfont\scshape Corollary}
\newtheorem*{corollary*}{\normalfont\scshape Corollary}
\theoremstyle{remark}
\newtheorem*{remark*}{\normalfont\scshape Remark}
\newtheorem*{notation}{\normalfont\scshape Notation}
\numberwithin{equation}{section}
\renewcommand{\footnoterule}{
	\kern -3pt
	\hrule width 2.5in height 0.4pt
	\kern 3pt
}
\begin{document}
	
	\title[  ]
	{ On the Representation of Large Even Integers as the Sum of Eight Primes from Positive Density Sets }
	
	\author[ ]
	{Meng Gao}
	
	\address{Department of Mathematics, China University of Mining and Technology,
		Beijing 100083, People's Republic of China}
	
	\email{meng.gao.math@gmail.com}

	\date{}
	
	\footnotetext[1]{	{\textbf{Keywords}}: linear equation with prime variables, transference principle, positive density \\
		
		\quad\,\,
		{\textbf{MR(2020) Subject Classification}}: 11D85, 11P32, 11P55, 11P70
		
	}

	\begin{abstract}
		Let $ \mathbb{P} $ denote the set of all primes. We have proved that if $ A $ is a subset of $ \mathbb{P} $, and the lower density of $A$ in $ \mathbb{P} $ is larger than $1/2$, then every sufficiently large even integer $n$ can be expressed in the form $ n=p_{1}+\cdots+p_{8} $, where $ p_{1},\ldots,p_{8}\in A $. The constant $ 1/2 $ in this statement is the best possible.	
	\end{abstract}
	
	\maketitle

	\section{Introduction and main result}
        The transference principle was initially  developed by Green \cite{Gre} and has now become an important method in the study of additive number theory. Employing this method,
        Li-Pan \cite{LP} and Shao \cite{Shao} studied the density version of Vinogradov's three primes theorem and obtained remarkable results. For a set $ A\subseteq \mathbb{P} $, define
        \begin{center}
        	$ \delta_{A}=\underline{\delta}_{A}(\mathbb{P}):=\liminf \limits_{N\rightarrow\infty}\dfrac{|A\cap[N]|}{|\mathbb{P}\cap[N]|} , $
        \end{center}
        where $ [N]:=\{1,\ldots,N\} $.
        
        Let $A_{1}$, $A_{2}$, $A_{3}\subseteq \mathbb{P}$. In \cite{LP}, Li and Pan proved that if $ \delta_{A_{1}}+\delta_{A_{2}}+\delta_{A_{3}}>2 $, then for every sufficiently large odd integer $n$,  there are primes $ p_{i}\in A_{i}\ (i=1,2,3)$ such that  $ n=p_{1}+p_{2}+p_{3} $. Inspired by the work of Li and Pan, Shao \cite{Shao} proved that if $ \delta_{A_{1}}>5/8 $, then for every sufficiently large odd integer $n$, there are
        primes $ p_{1},\ p_{2},\ p_{3} \in A_{1} $ such that $ n=p_{1}+p_{2}+p_{3} $. Furthermore, building on Shao's work, Shen \cite{Shen} extended Shao's result to an asymmetric version. It is worth mentioning that recently Alsetri and Shao \cite{AS} used the transference principle to obtain a density version of the almost all binary Goldbach problem. As for the nonlinear Goldbach problem, the author \cite{Gao} extended the Waring-Goldbach problem to a density version. 
        
        In \cite{Shao}, Shao showed that the constant $5/8$ in his result \cite[Theorem 1.1]{Shao} cannot be improved. Therefore, we hope to lower the density threshold by relaxing the requirement on the number of prime variables, thereby further generalizing Shao's result.
        Our result is the following.
        
        \begin{theorem}\label{thm 1.1}
        	Let $A$ be a subset of $\mathbb{P}$ with $\delta_{A}>1/2$. Then for every sufficiently large even integer $n$, there exist $ p_{1},\ldots,p_{8}\in A  $ such that $ n=p_{1}+\cdots+p_{8} $.
        \end{theorem}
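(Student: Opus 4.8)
The plan is to run a transference-principle argument in the style of Shao \cite{Shao}, but with eight prime variables in place of three; the extra variables are exactly what lets the density threshold drop from $5/8$ to $1/2$. Throughout, $n$ is a large even integer.

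First I would set up the arithmetic reduction. Fix $w=w(n)\to\infty$ slowly, put $W=\prod_{p\le w}p$, and restrict the primes to a suitable range, say $p_i\le n/4$. For a tuple $\mathbf b=(b_1,\dots,b_8)\in\bigl((\mathbb Z/W\mathbb Z)^{\times}\bigr)^{8}$ with $b_1+\cdots+b_8\equiv n\pmod W$, the substitution $p_i=Wm_i+b_i$ turns the quantity we must show is positive,
\[
r(n)=\sum_{\substack{p_1+\cdots+p_8=n\\ p_i\in A,\ p_i\le n/4}}(\log p_1)\cdots(\log p_8),
\]
into a sum, over admissible $\mathbf b$, of weighted counts of solutions of $m_1+\cdots+m_8=n'_{\mathbf b}$ with $m_i\le n/(4W)$, the weights being the normalised indicators $f^{(b)}(m)=\tfrac{\phi(W)}{W}\mathbf 1_{Wm+b\in A}\log(Wm+b)$. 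One passes to $\mathbb Z/N\mathbb Z$ for a prime $N$ slightly larger than $2n/W$ so that the equation reads without wrap-around; the range $p_i\le n/4$ is chosen so that $n'_{\mathbf b}$ sits where the interval obstruction to reaching it by eight summands of size $\le n/(4W)$ dissolves exactly once the eight associated densities sum past $4$. It then suffices to exhibit one admissible $\mathbf b$ for which the corresponding eight-fold count is $\gg N^{7}$.

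Next comes the transference step. Since $A\subseteq\mathbb P$, each $f^{(b)}$ is pointwise dominated by a pseudorandom majorant $\nu^{(b)}$, namely the Green--Tao/Ramar\'e enveloping sieve for the primes $\equiv b\pmod W$, which obeys the linear-forms and restriction estimates needed with eight variables. The transference principle (Green \cite{Gre}, as used by Li--Pan \cite{LP} and Shao \cite{Shao}) then produces $g^{(b)}\colon\mathbb Z/N\mathbb Z\to[0,1]$ with $\mathbb E\,g^{(b)}=\mathbb E\,f^{(b)}=\delta_b+o(1)$, where $\delta_b$ is the relative density of $A$ among the primes $\equiv b\pmod W$, and $\|f^{(b)}-g^{(b)}\|_{U^2(\mathbb Z/N\mathbb Z)}=o(1)$. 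As a single linear equation in eight variables is controlled by the $U^2$-norm, a generalised von Neumann estimate replaces the $f^{(b_i)}$ by the $g^{(b_i)}$ up to an error $o(N^{7})$, so the task reduces to a lower bound $g^{(b_1)}*\cdots*g^{(b_8)}(n'_{\mathbf b})\gg N^{7}$ for bounded nonnegative functions with the prescribed means and support in $\{m\le n/(4W)\}$.

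The heart of the matter — and, I expect, the main obstacle — is the combinatorial core, where the hypothesis $\delta_A>1/2$ is spent. Set $T=\{b\in(\mathbb Z/W\mathbb Z)^{\times}:\delta_b>0\}$. For $n$ large one has $|A\cap[n]|>\tfrac12\pi(n)$, hence $\sum_b\delta_b>\phi(W)/2$ and so $|T|>\phi(W)/2$; in particular, for each prime $p\le w$ more than $(p-1)/2$ residue classes mod $p$ meet $A$, so by Cauchy--Davenport eight of them can be summed to any target mod $p$, and by the Chinese Remainder Theorem there is an admissible $\mathbf b$ with every $b_i\in T$. It is precisely this local-solubility step that makes $1/2$ sharp: if $A=\{p\equiv 1\ (\mathrm{mod}\ 3)\}$ then $\delta_A=1/2$, $T$ meets only the class $1\bmod 3$, every $b_1+\cdots+b_8\equiv 2\pmod3$, and the conclusion fails for all even $n\equiv 0,1\pmod3$. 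Moreover the admissible $\mathbf b$ can be taken with every $b_i\in T$ and, simultaneously, $\sum_{i=1}^{8}\delta_{b_i}>4$: the class densities appearing in tuples drawn from $T^{8}$ average $\tfrac{1}{|T|}\sum_{b\in T}\delta_b=\tfrac{\phi(W)}{|T|}\,\delta_A\ge\delta_A>\tfrac12$, so the mean of $\sum_i\delta_{b_i}$ over such tuples exceeds $4$ and some tuple attains it. With such a $\mathbf b$ fixed, a combinatorial lemma in the spirit of Shao — whose extremal configurations are indicators of intervals (or short unions of intervals), for which the eight-fold sumset reaches $n'_{\mathbf b}$ as soon as the eight densities exceed $4$ in total — gives $g^{(b_1)}*\cdots*g^{(b_8)}(n'_{\mathbf b})\gg N^{7}$, and undoing the earlier reductions yields $r(n)>0$. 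The hard part is this joint choice of $\mathbf b$: it must satisfy the congruence $b_1+\cdots+b_8\equiv n\pmod W$, lie entirely in $T$, and keep $\sum_i\delta_{b_i}$ above the critical value $4$ all at once, uniformly as $w\to\infty$, which forces one to control the distribution of $A$ modulo $W$ rather than merely modulo the individual primes $p\mid W$. Having eight variables is exactly what renders these demands jointly satisfiable from $\delta_A>1/2$; with three variables the same bookkeeping forces the stronger threshold $5/8$ of \cite{Shao}.
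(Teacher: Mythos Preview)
Your overall architecture matches the paper's: a $W$-trick, a pseudorandom majorant for the primes in each progression, a restriction/transference step reducing to bounded functions, and a combinatorial core that selects residues $b_1,\dots,b_8$ with $\sum_i \delta_{b_i}>4$. The gap is precisely in that core, and it is a genuine one rather than a detail.

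Your Cauchy--Davenport + CRT argument does not produce $b_i\in T$. From $|T|>\phi(W)/2$ you correctly deduce that the projection of $T$ to $(\mathbb Z/p\mathbb Z)^\times$ has size $>(p-1)/2$ for each $p\mid W$; but $T$ is \emph{not} the product of its projections, so lifting a prime-by-prime solution via CRT lands in $\prod_p \pi_p(T)$, not in $T$. Likewise, your averaging argument that some tuple in $T^8$ has $\sum_i\delta_{b_i}>4$ ignores the constraint $\sum_i b_i\equiv n\pmod W$; the constrained average could in principle be much smaller. You correctly flag at the end that this joint choice is ``the hard part'' and that one must control $A$ modulo $W$ rather than modulo each $p$ --- but you do not supply a mechanism for doing so, and the earlier Cauchy--Davenport paragraph suggests an argument that does not in fact close the gap.

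The paper's contribution is exactly this missing mechanism. Its Theorem~1.2 states that for squarefree $q$ and any $B\subseteq\mathbb Z_q^\ast$ with $|B|>\tfrac12\phi(q)$, one has $4B=\{a:a\equiv 0\pmod{(2,q)}\}$. The proof is not via Cauchy--Davenport on the factors; it uses a compression/downset argument (after \cite{Sal}) that replaces $B$ by a downset $B'$ with $|4B'|\le|4B|$ and then shows directly that $2B'$ contains the full ``box'' below $q-2$, whence $4B'$ is the whole even part of $\mathbb Z_q$. A functional version (Lemma~2.1) then upgrades this to: if $h:\mathbb Z_W^\ast\to[0,1)$ has mean $>\tfrac12$, then for every even $n$ there exist $b_1,\dots,b_8$ with $\sum b_i\equiv n$, all $h(b_i)>0$, and $\sum_i h(b_i)>4$. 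Applied to $h(b)\approx g(b,N)=\mathbb E_{m}f^{(b)}(m)$, this simultaneously enforces the congruence, membership in $T$, and the density-sum threshold --- precisely the three demands you listed as jointly hard. Once this is in hand, the paper invokes Salmensuu's transference lemma (with pseudorandomness and a restriction estimate for exponent $>4$) rather than a $U^2$-decomposition, but that difference is cosmetic; the substantive missing idea in your proposal is the downset sumset theorem in $\mathbb Z_W^\ast$.
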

        The constant $ 1/2 $ in Theorem \ref{thm 1.1} cannot be improved. In fact, if we take\begin{center}
        	$ A=\{ p\in \mathbb{P}: p\equiv 1\ (\bmod\ 3) \} $,
        \end{center}
        then $ \delta_{A}=1/2 $, but any even integer $ n\equiv 1\ (\bmod\ 3) $ cannot be written as a sum of eight elements from $ A $.
        
         Note that in \cite[Theorem 1.3]{Shao}, the assumption on the density of $A$ in $ \mathbb{P} $ can also be $ \delta_{A}>1/2 $ (see the discussion following \cite[Theorem 1.3]{Shao}), but $A$ also needs to satisfy local conditions. In Theorem \ref{thm 1.1}, we removed the local conditions at the expense of increasing the number of prime variables.
        
        One of the key components in the proof of Theorem \ref{thm 1.1} is the following theorem.
        \begin{theorem}\label{thm 1.2}
        	Let $q$ be a square-free positive integer. Let $A\subseteq \mathbb{Z}_{q}^{\ast}$ with $|A|>\frac{1}{2}\varphi(q)$. Then \begin{center}
        		$A+A+A+A=\{a\in \mathbb{Z}_{q}:a\equiv 0\ (\bmod (2,q)) \}$,
        	\end{center}
        	where $ \varphi $ is the Euler totient function.
        \end{theorem}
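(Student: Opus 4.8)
The plan is to proceed in three stages: two elementary reductions, a reduction via Kneser's theorem to a ``nondegenerate'' situation, and finally a sumset size estimate, which is where the real difficulty lies.

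\emph{Reductions.} Every element of $\mathbb{Z}_{q}^{\ast}$ is odd whenever $2\mid q$, so if $q=2q_{0}$ with $q_{0}$ odd square-free, the Chinese Remainder Theorem identifies $A$ with a subset $A_{0}\subseteq\mathbb{Z}_{q_{0}}^{\ast}$ having $|A_{0}|=|A|>\tfrac12\varphi(q_{0})$ (using $\varphi(q)=\varphi(q_{0})$), and identifies the claimed equality with $A_{0}+A_{0}+A_{0}+A_{0}=\mathbb{Z}_{q_{0}}$. Thus I may assume $q$ is odd and square-free, and the goal is $A+A+A+A=\mathbb{Z}_{q}$. Now recall that any $B\subseteq\mathbb{Z}_{q}$ with $|B|>q/2$ satisfies $B+B=\mathbb{Z}_{q}$ (for $z\in\mathbb{Z}_{q}$ the sets $B$ and $z-B$ must intersect); applying this with $B=A+A$, it suffices to prove the single inequality $|A+A|>q/2$. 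For later use I note that counting the fibres of reduction modulo a prime $p\mid q$ gives $|A\bmod p|>\tfrac{p-1}{2}$, hence $(A\bmod p)+(A\bmod p)=\mathbb{Z}_{p}$ by the Cauchy--Davenport inequality; so $A+A$ already surjects onto $\mathbb{Z}_{p}$ for every $p\mid q$, and the task is to upgrade this coordinatewise surjectivity to an honest lower bound for $|A+A|$.

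\emph{Kneser reduction.} Let $K=\mathrm{Stab}(A+A)$. If $K=\mathbb{Z}_{q}$ then $A+A=\mathbb{Z}_{q}$ and we are done. Otherwise, writing $r:=q/|K|>1$, the set $A+A$ is the full preimage of $\overline{A}+\overline{A}$ under the reduction map $\pi\colon\mathbb{Z}_{q}\to\mathbb{Z}_{r}$, where $\overline{A}=\pi(A)$; since $q$ is square-free, $\ker\pi=K$, so $|A+A|=|K|\cdot|\overline{A}+\overline{A}|$ and $q=|K|\cdot r$. A fibre count (as above, now over units) shows $\overline{A}\subseteq\mathbb{Z}_{r}^{\ast}$ with $|\overline{A}|>\tfrac12\varphi(r)$, and $\overline{A}+\overline{A}$ has \emph{trivial} stabiliser in $\mathbb{Z}_{r}$ by construction. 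Hence it suffices to prove: if $r$ is odd square-free, $\overline{A}\subseteq\mathbb{Z}_{r}^{\ast}$ with $|\overline{A}|>\tfrac12\varphi(r)$ and $\mathrm{Stab}(\overline{A}+\overline{A})=\{0\}$, then $|\overline{A}+\overline{A}|>r/2$.

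\emph{The sumset estimate, and the main obstacle.} With a trivial stabiliser, Kneser's theorem gives $|\overline{A}+\overline{A}|\ge 2|\overline{A}|-1\ge\varphi(r)$. If $r$ has at most two prime factors then $\varphi(r)>r/2$ for $r$ odd square-free, and we are done; this disposes, in particular, of every case in which $\mathrm{Stab}(A+A)$ is nontrivial and $q$ has at most three prime factors. The genuinely remaining case is $r$ with three or more prime factors, where $\varphi(r)$ can drop below $r/2$ and the bare Kneser bound is insufficient; this is the heart of the matter. The extra leverage is that $\overline{A}$ is a set of \emph{units}: the sets that saturate $|\overline{A}+\overline{A}|=2|\overline{A}|-1$, or come within a bounded amount of it, are (by the structure theory attached to Kneser's theorem, in the spirit of Vosper's and Freiman's theorems) essentially arithmetic progressions inside cosets of subgroups of $\mathbb{Z}_{r}$ — yet no coset of a nontrivial subgroup of $\mathbb{Z}_{r}$ consists entirely of units, since reducing modulo a prime dividing the order of that subgroup produces a zero residue. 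I expect to convert this incompatibility into $|\overline{A}+\overline{A}|>r/2$ by an induction on the number of prime factors of $r$: fix a prime $p\mid r$, split $\overline{A}$ into its fibres over $\mathbb{Z}_{p}^{\ast}$, apply Cauchy--Davenport to the set of indices carrying ``large'' fibres, and feed the large fibres into the inductive bound inside $\mathbb{Z}_{r/p}$. The delicate point — where I expect to spend most of the effort — is handling the residues modulo $p$ that can only be represented through small fibres: there one cannot invoke the inductive bound for a single summand, and must instead show that the union of the relevant products of two fibres over such a residue still fills out $\mathbb{Z}_{r/p}$.
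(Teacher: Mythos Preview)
Your reductions are fine, but the proof has a real hole at the last stage. The target $|\overline{A}+\overline{A}|>r/2$ for odd square-free $r$ with trivially stabilised sumset and $|\overline{A}|>\tfrac12\varphi(r)$ is not something you have established when $r$ has three or more prime factors; you only outline a fibre-splitting induction and explicitly flag the ``small fibre'' residues as the place you expect to work hardest. That expectation is well-founded, but the sketch does not make clear how to get enough out of those residues: the inductive hypothesis gives you a lower bound for the sumset of a single large fibre inside $\mathbb{Z}_{r/p}$, but for a residue hit only via two small fibres you need a lower bound for a \emph{union} of products of fibres, and nothing in Cauchy--Davenport or Vosper-type structure supplies that directly. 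Worse, your intermediate inequality is genuinely strong: since $\varphi(r)/r\to 0$ along primorials, the hypothesis $|\overline{A}|>\tfrac12\varphi(r)$ is asymptotically very weak relative to $r$, while the conclusion $|\overline{A}+\overline{A}|>r/2$ demands a fixed proportion of $r$. Bridging that gap would require exploiting the unit condition in a much sharper way than ``near-extremal Kneser sets live in cosets,'' and it is not at all clear your induction scheme does so.

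By contrast, the paper sidesteps this entirely via a compression argument (borrowed from Salmensuu): one replaces $A$ by a \emph{downset} $A'\subseteq\prod_{p\mid q}\{0,\dots,p-2\}$ with $|A'|=|A|$ and $|4A'|\le|4A|$. A one-line pigeonhole (with the shift $A''=A'+\{1\}$ inside the box $S=\prod_{p\mid q}\{0,\dots,p-2\}$ of size $\varphi(q)$) shows $u-1\in 2A'$ where $u=q-1$; since $2A'$ is itself a downset, this forces $D(u-1)=\prod_{p\mid q}\{0,\dots,p-2\}\subseteq 2A'$, and then $2D(u-1)$ is already the full target set. No Kneser, no induction on prime factors, no structure theory---the unit condition is used only once, to place $A'$ inside the box $S$, and the rest is combinatorics of downsets. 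If you want to rescue your route you would need an independent proof of $|A+A|>q/2$ under the unit hypothesis, which is a separate (and, as far as I can see, nontrivial) problem; the paper's method is both shorter and complete.
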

        When $ q $ in Theorem \ref{thm 1.2} is odd, we immediately have the following result.
        \begin{corollary}\label{corollary 2}
        	Let $q$ be an odd square-free positive integer. Let $A\subseteq \mathbb{Z}_{q}^{\ast}$ with $|A|>\frac{1}{2}\varphi(q)$. Then \begin{center}
        		$A+A+A+A=\mathbb{Z}_{q}$.
        	\end{center}
        \end{corollary}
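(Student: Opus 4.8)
The plan is simply to specialize Theorem \ref{thm 1.2} to the case of odd $q$. Since $q$ is odd we have $(2,q)=\gcd(2,q)=1$, so the modulus $(2,q)$ appearing in the statement of Theorem \ref{thm 1.2} is $1$. Consequently the congruence constraint $a\equiv 0\ (\bmod\ (2,q))$ is vacuous: it holds for every $a\in\mathbb{Z}_{q}$. Hence
\[
\{a\in\mathbb{Z}_{q}:a\equiv 0\ (\bmod\ (2,q))\}=\mathbb{Z}_{q}.
\]
Now apply Theorem \ref{thm 1.2} to this $q$ together with the given set $A\subseteq\mathbb{Z}_{q}^{\ast}$ satisfying $|A|>\tfrac{1}{2}\varphi(q)$; its conclusion reads $A+A+A+A=\{a\in\mathbb{Z}_{q}:a\equiv 0\ (\bmod\ (2,q))\}$, which by the previous line equals $\mathbb{Z}_{q}$. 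This gives the claimed identity.

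Because the deduction is purely formal, there is no genuine obstacle here: all the arithmetic content is carried by Theorem \ref{thm 1.2}, and the corollary only records the clean form the conclusion takes once the parity obstruction disappears. For completeness one may observe that the hypothesis is non-vacuous precisely when $\varphi(q)>0$, i.e.\ for every positive integer $q$ (with the degenerate case $q=1$ giving $\mathbb{Z}_{q}=\{0\}$ and $A=\{0\}$, for which the statement is trivial), so no side conditions need to be imposed. Thus the proof consists of a single invocation of Theorem \ref{thm 1.2}.
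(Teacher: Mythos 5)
Your proof is correct and matches the paper exactly: the paper states the corollary as an immediate specialization of Theorem \ref{thm 1.2}, noting that for odd $q$ the condition $a\equiv 0 \ (\bmod\ (2,q))$ is vacuous since $(2,q)=1$. Nothing further is needed.
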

         Corollary \ref{corollary 2} can also be viewed as an extension of \cite[Corollary 1.5]{Shao}. We will give the proof of Theorem \ref{thm 1.2} in Section 2, and prove Theorem \ref{thm 1.1} in
        Section 5.
	\begin{notation}
		For a set $ A \subseteq \mathbb{N} $ and $ s\in \mathbb{N} $, we define $ sA=\{a_{1}+\cdots+a_{s}:a_{1},\ldots,a_{s}\in A\} $. We use $ ||x|| $ to denote the distance from $ x $ to the nearest integer. For $ x\in \mathbb{R} $ and $ q\in \mathbb{N} $,  we use notation $ e(x) $ and $ e_{q}(x) $ for $ e^{2\pi ix} $ and $ e^{2\pi ix/q}$ respectively. We will use $ \mathbb{T} $ to denote the torus  $\mathbb{R}/\mathbb{Z}$ , and will identify $ \mathbb{T} $ with the interval $ [0,1) $ throughout this paper. The letter $ p $, with or without subscript, denotes a prime number.
		For a set $ A $, we write $ 1_{A}(x) $ for  its characteristic function. If $ f:B\rightarrow \mathbb{C} $ is a function and $ B_{1} $ is a non-empty finite subset of $ B $, we write $ \mathbb{E}_{x \in B_{1}}f(x) $ for the average value of $ f $ on  $ B_{1} $,  that is to say
		\begin{center}
			$ \mathbb{E}_{x \in B_{1}}f(x)=\dfrac{1}{|B_{1}|}\sum\limits_{x \in B_{1}}f(x) $.
		\end{center}

	\end{notation}
	
	\section{Mean value estimate}
	Let $ n_{0} $ be a sufficiently large even integer.
	Let $ w=\log\log\log n_{0} $,
	\begin{center}
		$W:=\prod\limits_{1<p\leq w}p $ and $ \ N:=\lfloor n_{0}/(4W) \rfloor$.
	\end{center}
	Let $b\in [W]$ be such that $(b,\ W)=1$. For a set $A\subseteq \mathbb{P} $, 
	define functions $ f_{b},\ \nu_{b}:[N]\rightarrow \mathbb{R}_{\geq 0} $ by
	\begin{center}
		$  f_{b}(n):=\begin{cases}
			\dfrac{\varphi(W)}{W}\log p \quad if \  Wn+b=p, \ p\in A,  \\
			0  \qquad\qquad\qquad\qquad \  otherwise, 
			\end {cases}  $
		\end{center}
		
		\begin{center}
			$  \nu_{b}(n):=\begin{cases}
				\dfrac{\varphi(W)}{W}\log p \quad if \  Wn+b=p, \ p\in \mathbb{P},  \\
				0  \qquad\qquad\qquad\qquad \ otherwise. 
				\end {cases}  $	
			\end{center}
			
			Define function $ g:[W]\times \mathbb{N}\rightarrow \mathbb{R}_{\geq 0} $ by
			\begin{center}
				$ g(b,N):=\mathbb{E}_{n \in [N]}f_{b}(n). $
			\end{center}
			Obviously, $ f_{b}(n)\leq \nu_{b}(n) $  for all $ n \in [N] $. By the Siegel-Walfisz theorem, we know that $ \mathbb{E}_{n\in [N]}\nu_{b}(n)=1+o(1) $.
			
			Next, we will use a downset idea from \cite[Subsection 5.2]{Sal} to prove Theorem \ref{thm 1.2}.  Before we continue, let’s introduce some definitions. For $n\in \mathbb{N}$ and $a,b\in \mathbb{Z}_{n}$, we say $ a<b\  (\bmod\ n) $ if there are $ a^{\prime},b^{\prime}\in \{0,\ldots,n-1\} $ such that $ a^{\prime}\equiv a (\bmod\ n),\ b^{\prime}\equiv b (\bmod\ n) $ and $ a^{\prime}<b^{\prime} $. Let $q$ be a square-free positive integer. For $v\in\mathbb{Z}_{q}\cong \prod_{p|q}\mathbb{Z}_{p}$, define \begin{center}
				$D(v):=\{b\in \mathbb{Z}_{q}:\forall\ p\ |\ q,\ 0\leq b\leq v\ (\bmod\ p)  \}$.
			\end{center}
			For a set $A\subseteq \mathbb{Z}_{q}$, we say $A$ is a downset if $ D(v)\subseteq A$ for all $v\in A$. For $u\in \mathbb{Z}_{q}^{\ast} $ and $a\in \mathbb{Z}_{q}$, we say $u$ is an upper bound for $a$ if $ a<u\  (\bmod\ p)  $ for all $ p\ |\ q $. We say $u\in \mathbb{Z}_{q}^{\ast} $ is an upper bound for $A\subseteq \mathbb{Z}_{q}$ if $u$ is an upper bound for
			all elements of $A$.
			
			$ \mathit{Proof \ of \ Theorem \ \ref{thm 1.2}} $. Let $A\subseteq \mathbb{Z}_{q}^{\ast}$ with $|A|>\frac{1}{2}\varphi(q)$. Let  $u=q-1$. It is evident that 
			\begin{center}
				$ u\equiv p-1\ (\bmod\ p) $ 
			\end{center}
			for all $ p\ |\ q $.
			By \cite[Lemma 5.8]{Sal}, there exists a downset $A^{\prime}\subseteq \mathbb{Z}_{q} $ such that $|A|=|A^{\prime}|$, $u$ is an upper bound for $ A^{\prime} $ and $|4A^{\prime}|\leq|4A|$. Note that $2A^{\prime}$ is also a downset.
			
			Let $S=\{a\in \mathbb{Z}_{q}: \forall\ p\ |\ q,\ a<u \ (\bmod\ p) \}$ and $A^{\prime\prime}=A^{\prime}+\{1\} $. Clearly, $|S|=\varphi(q)$ and $A^{\prime}\subseteq S $. Note that $u-A^{\prime\prime}\subseteq S $. Therefore,
			\begin{center}
				$\begin{aligned}
					\#\{ u=a+b:a\in A^{\prime},b\in A^{\prime\prime} \}&=|A^{\prime}\cap (u-A^{\prime\prime})|=|A^{\prime}\setminus (S\setminus(u-A^{\prime\prime}))  |\\
					&\geq |A^{\prime}|-(|S|-|A^{\prime\prime}|)>0.
				\end{aligned}
				$
			\end{center}
			It follows that $u\in A^{\prime}+A^{\prime\prime}=2A^{\prime}+\{1\} $. Hence $u-1\in 2A^{\prime}$. Since $ 2A^{\prime} $ is a downset, we have $D(u-1)\subseteq 2A^{\prime} $. Note that $2D(u-1)=\{ a\in \mathbb{Z}_{q}:a\equiv 0\ (\bmod (2,q))  \} $. Therefore, 
			\begin{center}
				$ \{ a\in \mathbb{Z}_{q}:a\equiv 0\ (\bmod (2,q))  \}\subseteq 4A^{\prime} $.
			\end{center}
			This, together with $ |4A^{\prime}|\leq|4A| $ and 
			\begin{center}
				$ 4A\subseteq \{ a\in \mathbb{Z}_{q}:a\equiv 0\ (\bmod (2,q))  \} $,
			\end{center}
			yields
			$4A=\{ a\in \mathbb{Z}_{q}:a\equiv 0\ (\bmod (2,q))  \}$.\qed
			
			The following lemma is essentially a generalized version of Theorem \ref{thm 1.2}.
			
			\begin{lemma}\label{generalized version of thm 1.2}
				Let $ h:\mathbb{Z}_{W}^{\ast} \rightarrow [0,1) $ satisfy $ \mathbb{E}_{b \in \mathbb{Z}_{W}^{\ast}}h(b)>1/2 $. Then, for all $ n \in \mathbb{Z}_{W} $ with $  n \equiv 0 \ (\bmod \ 2) $, there exist $ b_{1},\ldots,b_{8}\in \mathbb{Z}_{W}^{\ast} $ such that $ n\equiv b_{1}+\cdots+b_{8} (\bmod\ W),\ h(b_{i})>0 $ for all $ i \in \{1,\ldots,8\} $ and 
				\begin{center}
					$ h(b_{1})+\cdots+h(b_{8})>4. $
				\end{center}
			\end{lemma}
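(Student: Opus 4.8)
The plan is to deduce the lemma directly from Theorem~\ref{thm 1.2}, spending four of the eight summands on a single heavy element. Recall $W$ is square-free and even (for all large $n_0$, since $w=\log\log\log n_0\ge 2$), so Theorem~\ref{thm 1.2} with $q=W$ says that any $A\subseteq\mathbb{Z}_{W}^{\ast}$ with $|A|>\tfrac12\varphi(W)$ satisfies $A+A+A+A=\{a\in\mathbb{Z}_W:a\equiv 0\ (\bmod\ 2)\}$. The idea is then: fix $b^{\circ}\in\mathbb{Z}_{W}^{\ast}$ maximizing $h$, use $b^{\circ}$ four times, and complete the sum with four elements drawn from a level set $A^{\ast}=\{b:h(b)>\lambda_{0}\}$, with $\lambda_{0}$ taken as large as the density constraint $|A^{\ast}|>\tfrac12\varphi(W)$ permits. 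The total weight is then at least $4h(b^{\circ})+4\lambda_{0}$, so it suffices to arrange $h(b^{\circ})+\lambda_{0}>1$.

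First I would note that $h(b^{\circ})=\max_{b}h(b)\ge\mathbb{E}_{b\in\mathbb{Z}_{W}^{\ast}}h(b)>1/2$, hence $h(b^{\circ})\in(1/2,1)$, and set
\[
\lambda^{\ast}:=\sup\Big\{\lambda\in[0,1):\ \big|\{b\in\mathbb{Z}_{W}^{\ast}:h(b)>\lambda\}\big|>\tfrac12\varphi(W)\Big\}.
\]
The set here is nonempty because $\big|\{b:h(b)>0\}\big|>\sum_{b}h(b)>\tfrac12\varphi(W)$ (using $h<1$ pointwise), and $\lambda^{\ast}\le h(b^{\circ})$ since $\{b:h(b)>\lambda\}=\emptyset$ once $\lambda\ge h(b^{\circ})$. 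The key step is the inequality $\lambda^{\ast}+h(b^{\circ})>1$. Suppose it fails. Bounding $\big|\{b:h(b)>\lambda\}\big|$ by $\varphi(W)$ for $\lambda\in[0,\lambda^{\ast}]$ and by $\tfrac12\varphi(W)$ for $\lambda\in(\lambda^{\ast},h(b^{\circ})]$, the distribution-function identity $\sum_{b}h(b)=\int_{0}^{h(b^{\circ})}\big|\{b:h(b)>\lambda\}\big|\,d\lambda$ gives $\sum_{b}h(b)\le\varphi(W)\lambda^{\ast}+\tfrac12\varphi(W)\big(h(b^{\circ})-\lambda^{\ast}\big)=\tfrac12\varphi(W)\big(h(b^{\circ})+\lambda^{\ast}\big)\le\tfrac12\varphi(W)$, contradicting $\sum_{b}h(b)>\tfrac12\varphi(W)$.

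With this in hand, choose any $\lambda_{0}\in\big(1-h(b^{\circ}),\lambda^{\ast}\big)$, a nonempty interval, and put $A^{\ast}:=\{b\in\mathbb{Z}_{W}^{\ast}:h(b)>\lambda_{0}\}$; since $\lambda\mapsto\big|\{h>\lambda\}\big|$ is non-increasing and $\lambda_{0}<\lambda^{\ast}$, we have $|A^{\ast}|>\tfrac12\varphi(W)$. By Theorem~\ref{thm 1.2}, $A^{\ast}+A^{\ast}+A^{\ast}+A^{\ast}=\{a\in\mathbb{Z}_W:a\equiv 0\ (\bmod\ 2)\}$. Let $n\in\mathbb{Z}_W$ be even. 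As $b^{\circ}$ is a unit modulo the even number $W$ it is odd, so $n-4b^{\circ}$ is even; hence there are $c_{1},c_{2},c_{3},c_{4}\in A^{\ast}$ with $c_{1}+c_{2}+c_{3}+c_{4}\equiv n-4b^{\circ}\ (\bmod\ W)$. Taking $(b_{1},\dots,b_{8})=(b^{\circ},b^{\circ},b^{\circ},b^{\circ},c_{1},c_{2},c_{3},c_{4})$ yields $b_{1}+\dots+b_{8}\equiv n\ (\bmod\ W)$, with $h(b_{i})>0$ for all $i$ (since $h(b^{\circ})>1/2>0$ and $h(c_{j})>\lambda_{0}>1-h(b^{\circ})>0$), and $h(b_{1})+\dots+h(b_{8})=4h(b^{\circ})+\sum_{j=1}^{4}h(c_{j})>4h(b^{\circ})+4\lambda_{0}=4\big(h(b^{\circ})+\lambda_{0}\big)>4$, as required.

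The only step that requires genuine thought is the inequality $\lambda^{\ast}+h(b^{\circ})>1$; everything else is bookkeeping. The points to keep an eye on are that $A^{\ast}$ automatically lies inside $\mathbb{Z}_{W}^{\ast}$ (because $h$ is only defined there), so Theorem~\ref{thm 1.2} applies verbatim, and that $W$ being even makes $\{a\in\mathbb{Z}_W:a\equiv 0\ (\bmod\ (2,W))\}$ coincide with the set of even residues of $\mathbb{Z}_W$, which is exactly the family of $n$ that must be represented; I expect no obstacle beyond these routine verifications.
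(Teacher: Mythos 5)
Your proof is correct, and it is worth noting that the paper does not actually write out a proof of this lemma: it simply asserts that one can repeat the argument of \cite[Lemma 6.4]{Sal} with Theorem \ref{thm 1.2} in place of \cite[Proposition 5.2]{Sal}. Your argument is a complete, self-contained instantiation of exactly that strategy: the sumset input is Theorem \ref{thm 1.2} applied to a superlevel set $A^{\ast}=\{b:h(b)>\lambda_{0}\}$ of density exceeding $\tfrac12\varphi(W)$, and the weight condition $\sum h(b_i)>4$ is secured by a pigeonhole device (four copies of the maximizer $b^{\circ}$ plus four elements of $A^{\ast}$, with the threshold $\lambda_0$ located via the layer-cake identity and the key inequality $\lambda^{\ast}+h(b^{\circ})>1$). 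All the steps check out: $W$ is indeed even and square-free for large $n_0$, so $(2,W)=2$ and Theorem \ref{thm 1.2} identifies $4A^{\ast}$ with the even residues; $b^{\circ}$ is odd, so $n-4b^{\circ}$ is even; the strict inequality $|\{b:h(b)>0\}|>\sum_b h(b)$ uses $h<1$ pointwise and the positivity of the sum; and the contradiction bound $\sum_b h(b)\le \tfrac12\varphi(W)(\lambda^{\ast}+h(b^{\circ}))$ is computed correctly. The only cosmetic remark is that your splitting $4+4$ of the eight summands is one of several workable choices; Salmensuu's own argument organizes the selection of the heavy elements slightly differently, but the underlying mechanism (level sets plus the four-fold sumset theorem) is the same, so you have not taken a genuinely different route --- you have filled in the details the paper leaves to the reader.
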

			\begin{proof}
				The proof of Lemma \ref{generalized version of thm 1.2} is omitted, since it follows directly by
				repeating the arguments in the proof of \cite[Lemma 6.4]{Sal} with Theorem \ref{thm 1.2} in place of \cite[Proposition 5.2]{Sal}.
			\end{proof}
			
			The following lemma provides a lower bound for $ \mathbb{E}_{b\in \mathbb{Z}_{W}^{\ast}}g(b,N) $.
			
			\begin{lemma}\label{lower bound}
				Let $ \epsilon\in (0,1) $. Then \begin{center}
					$ \mathbb{E}_{b\in \mathbb{Z}_{W}^{\ast}}g(b,N)>(1-\epsilon)\delta_{A} $
				\end{center}
				provided that $ n_{0} $ is large enough.
			\end{lemma}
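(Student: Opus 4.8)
The plan is to unfold $\mathbb{E}_{b\in\mathbb{Z}_{W}^{\ast}}g(b,N)$ into a $\log$-weighted count of primes from $A$, and then to compare it with the unweighted count that defines $\delta_{A}$. First I would note that, directly from the definitions and from the fact that $(n,b)\mapsto Wn+b$ is a bijection from $\{1,\dots,N\}\times\{b\in[W]:(b,W)=1\}$ onto the set of integers in $(W,W(N+1)]$ coprime to $W$ (and every prime exceeding $W$ is coprime to $W$),
\[
\mathbb{E}_{b\in\mathbb{Z}_{W}^{\ast}}g(b,N)=\frac{1}{\varphi(W)}\sum_{\substack{b\in[W]\\(b,W)=1}}\frac{1}{N}\sum_{n\in[N]}f_{b}(n)=\frac{1}{WN}\sum_{\substack{W<p\le W(N+1)\\p\in A}}\log p,
\]
the factors $\varphi(W)^{-1}$ and $\varphi(W)/W$ having cancelled. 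Set $X:=WN$, so that $n_{0}/4-W<X\le n_{0}/4$ and hence $X\to\infty$; since $W$ grows only like $(\log\log n_{0})^{1+o(1)}$, the discrepancy between $\sum_{W<p\le W(N+1),\,p\in A}\log p$ and $\sum_{p\le X,\,p\in A}\log p$ is at most $\sum_{p\le W}\log p+\sum_{X<p\le X+W}\log p=O(W\log n_{0})=o(X)$, so
\[
\mathbb{E}_{b\in\mathbb{Z}_{W}^{\ast}}g(b,N)=\frac{1}{X}\sum_{\substack{p\le X\\p\in A}}\log p+o(1).
\]

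It then remains to bound the right-hand side below by $\delta_{A}-o(1)$. The one subtlety is that $\delta_{A}$ is defined via the \emph{unweighted} count $|A\cap[X]|$, while the sum at hand carries a weight $\log p$; I would bridge this by passing to the complement $\mathbb{P}\setminus A$ rather than to $A$, so as never to form a difference of two quantities for which only one-sided information is available. From the definition of $\delta_{A}$ as a liminf, for each $\eta>0$ there is $X_{0}$ with $|A\cap[t]|>(\delta_{A}-\eta)\pi(t)$ for all $t\ge X_{0}$, hence $\#\{p\le t:p\notin A\}\le(1-\delta_{A}+\eta)\pi(t)$ for $t\ge X_{0}$. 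Abel summation, discarding the non-negative integral term it produces, yields for $X\ge X_{0}$
\[
\sum_{\substack{p\le X\\p\notin A}}\log p\le(\log X)\cdot\#\{p\le X:p\notin A\}\le(1-\delta_{A}+\eta)\,\pi(X)\log X.
\]
Subtracting this from $\sum_{p\le X}\log p=\theta(X)$ and invoking the prime number theorem in the shapes $\theta(X)=X+o(X)$ and $\pi(X)\log X=X+o(X)$ gives
\[
\frac{1}{X}\sum_{\substack{p\le X\\p\in A}}\log p\ge\frac{\theta(X)}{X}-(1-\delta_{A}+\eta)\frac{\pi(X)\log X}{X}\longrightarrow\delta_{A}-\eta,
\]
and since $\eta>0$ is arbitrary, $\liminf_{X\to\infty}\tfrac{1}{X}\sum_{p\le X,\,p\in A}\log p\ge\delta_{A}$.

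To conclude I would combine the two estimates. We may assume $\delta_{A}>0$ (in the intended application $\delta_{A}>1/2$; the case $\delta_{A}=0$ is either vacuous or immediate from $A$ being infinite). Given $\epsilon\in(0,1)$, take $n_{0}$ so large that $\tfrac{1}{X}\sum_{p\le X,\,p\in A}\log p>\delta_{A}-\tfrac{\epsilon\delta_{A}}{2}$ and that the $o(1)$ error in $\mathbb{E}_{b\in\mathbb{Z}_{W}^{\ast}}g(b,N)=\tfrac{1}{X}\sum_{p\le X,\,p\in A}\log p+o(1)$ has absolute value below $\tfrac{\epsilon\delta_{A}}{2}$; then $\mathbb{E}_{b\in\mathbb{Z}_{W}^{\ast}}g(b,N)>\delta_{A}-\epsilon\delta_{A}=(1-\epsilon)\delta_{A}$, as required. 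I do not anticipate a genuine obstacle here: this is the standard $W$-trick bookkeeping of the transference method together with elementary partial summation and Chebyshev/PNT estimates. The points that really need care are the passage from the $\log$-weighted prime sum to the unweighted density (handled through the complement, as above) and checking that $W$, although it tends to infinity, is negligible next to $n_{0}$, so that the boundary terms and the gap $X-n_{0}/4$ are genuinely $o(X)$.
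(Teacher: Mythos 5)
Your proposal is correct and follows essentially the same route as the paper: reduce the average over $b$ to the $\log$-weighted sum $\frac{1}{WN}\sum_{p\in A,\ p\le WN}\log p$ via the bijection $(n,b)\mapsto Wn+b$, and then convert the unweighted density hypothesis into a weighted lower bound by partial summation and the prime number theorem. The only (immaterial) difference is that the paper applies Abel summation directly to $A(t)=\#\{p\le t: p\in A\}$, whereas you bound the complementary sum $\sum_{p\le X,\ p\notin A}\log p$ trivially by $(1-\delta_A+\eta)\pi(X)\log X$ and subtract it from $\theta(X)$.
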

			\begin{proof}
				Note that $w\sim \log W$. When $p>2W $, we have $(p, W)=1$ provided that $ n_{0}$ is sufficiently large.
				Additionally, note that each prime $p\in A$	with $ 2W<p\leq WN+1 $ corresponds to a unique pair $(n,b)\in [N]\times[W]$ such that $ Wn+b=p $ and $ b\in \mathbb{Z}_{W}^{\ast} $. Therefore, 
				\begin{equation}\label{equ 1}
					 \begin{aligned}
						\mathbb{E}_{b \in \mathbb{Z}_{W}^{\ast}}g(b,N)
						&=\dfrac{1}{\varphi(W)N}\sum\limits_{\substack{b \in [W] \\ (b,W)=1}}\sum\limits_{\substack{n\in [N] \\ Wn+b=p\\ p \in A}}\dfrac{\varphi(W)}{W}\log p\\
						&\geq \dfrac{1}{WN}\sum\limits_{\substack{p \in A \\ 2W<p\leq WN+1 }}\log p. 
					\end{aligned} 
				\end{equation}
				Let $ X_{1}= 2W $ and $ X_{2}= WN+1  $. Define
				\begin{center}
					$\begin{aligned} A(t):=\sum\limits_{\substack{p\leq t \\ p \in A}}1. \end{aligned}$
				\end{center}
				Using partial summation, we have\begin{center}
					$\begin{aligned}
						\sum\limits_{\substack{p \in A \\ 2W<p\leq WN+1 }}\log p&=A(X_{2})\log X_{2}-A(X_{1})\log X_{1}-\int_{X_{1}}^{X_{2}}A(t)t^{-1}dt\\
						&\geq	(1-o(1))\delta_{A}X_{2}-(1+o(1))X_{1}-(1+o(1))\int_{X_{1}}^{X_{2}}\dfrac{1}{\log t}dt\\
						&=(1-o(1))\delta_{A}X_{2}.
					\end{aligned}$
				\end{center}
			This, together with (\ref{equ 1}), yields
			\begin{center}
				$ \mathbb{E}_{b\in \mathbb{Z}_{W}^{\ast}}g(b,N)>(1-\epsilon)\delta_{A} $.
			\end{center}	
				
			\end{proof}
			Finally, to apply the transference principle, we present a mean value result.
			
			\begin{proposition}\label{mean value estimate}
				Let $ \epsilon \in (0,1/6) $ and let $ N $ be sufficiently large in terms of $ \epsilon $. Let $ \delta_{A}>1/2+3\epsilon $. Then, for all $ n \in \mathbb{Z}_{W} $ with $ n \equiv 0 \ (\bmod \ 2) $, there exist $ b_{1},\ldots,b_{8} \in \mathbb{Z}_{W}^{\ast} $ such that $ n \equiv b_{1}+\cdots+b_{8}(\bmod \ W ), \ g(b_{i},N) > \epsilon/2$ for all $ i \in \left\lbrace 1,\ldots,8\right\rbrace  $ and 
				\begin{center}
					$ g(b_{1},N)+\cdots+g(b_{8},N) > 4(1+\epsilon) $.
				\end{center}
			\end{proposition}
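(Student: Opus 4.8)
The plan is to deduce Proposition \ref{mean value estimate} from Lemma \ref{generalized version of thm 1.2} by feeding that lemma a shifted and truncated copy of the function $g(\cdot,N)$. The preliminary observation is a uniform upper bound for $g$: since $f_{b}(n)\le\nu_{b}(n)$ for all $n\in[N]$, we have $g(b,N)\le\mathbb{E}_{n\in[N]}\nu_{b}(n)$ for every $b\in\mathbb{Z}_{W}^{\ast}$, and the Siegel--Walfisz estimate $\mathbb{E}_{n\in[N]}\nu_{b}(n)=1+o(1)$ is uniform in $b$ because $W=\prod_{1<p\le w}p$ has size only $(\log\log n_{0})^{1+o(1)}$, far below $\exp(\sqrt{\log N})$. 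Hence, once $n_{0}$ (equivalently $N$) is large enough in terms of $\epsilon$, we get $g(b,N)<1+\epsilon/2$ for every $b\in\mathbb{Z}_{W}^{\ast}$.

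Next I would set $h(b):=\max\{g(b,N)-\epsilon/2,\ 0\}$ for $b\in\mathbb{Z}_{W}^{\ast}$. The bound just obtained forces $h(b)<1$ for all $b$, so $h\colon\mathbb{Z}_{W}^{\ast}\to[0,1)$ is an admissible input for Lemma \ref{generalized version of thm 1.2}. To check its hypothesis, note that $h(b)\ge g(b,N)-\epsilon/2$ pointwise, so by Lemma \ref{lower bound} (applied with this $\epsilon$) together with the assumption $\delta_{A}>1/2+3\epsilon$,
\[
\mathbb{E}_{b\in\mathbb{Z}_{W}^{\ast}}h(b)\ \ge\ \mathbb{E}_{b\in\mathbb{Z}_{W}^{\ast}}g(b,N)-\tfrac{\epsilon}{2}\ >\ (1-\epsilon)\Big(\tfrac12+3\epsilon\Big)-\tfrac{\epsilon}{2}\ =\ \tfrac12+2\epsilon-3\epsilon^{2}\ >\ \tfrac12,
\]
the last step because $2\epsilon-3\epsilon^{2}=\epsilon(2-3\epsilon)>0$ for $\epsilon\in(0,1/6)$.

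Now I would apply Lemma \ref{generalized version of thm 1.2} to $h$: for every $n\in\mathbb{Z}_{W}$ with $n\equiv0\ (\bmod\ 2)$ it yields $b_{1},\dots,b_{8}\in\mathbb{Z}_{W}^{\ast}$ with $n\equiv b_{1}+\dots+b_{8}\ (\bmod\ W)$, $h(b_{i})>0$ for all $i$, and $h(b_{1})+\dots+h(b_{8})>4$. It remains only to translate this back to $g$. The condition $h(b_{i})>0$ says precisely $g(b_{i},N)>\epsilon/2$, which is the first assertion of the proposition; and since then $h(b_{i})=g(b_{i},N)-\epsilon/2$ for each $i$, summing gives $\sum_{i=1}^{8}g(b_{i},N)=\sum_{i=1}^{8}h(b_{i})+8\cdot\tfrac{\epsilon}{2}>4+4\epsilon=4(1+\epsilon)$, which is the remaining claim.

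This argument is almost entirely bookkeeping on top of Lemmas \ref{generalized version of thm 1.2} and \ref{lower bound} plus elementary inequalities; the one place needing genuine care is the uniformity over $b\in\mathbb{Z}_{W}^{\ast}$ of the Siegel--Walfisz bound used to conclude $g(b,N)<1+\epsilon/2$, together with the mutual compatibility of the several ``$n_{0}$ large enough'' thresholds coming from Lemma \ref{lower bound} and from that uniform bound. I expect this to be the main (and only mild) obstacle, and it is routine precisely because $W$ grows as slowly as $(\log\log n_{0})^{1+o(1)}$.
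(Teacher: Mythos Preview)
Your proof is correct and follows essentially the same route as the paper: shift $g(\cdot,N)$ down by $\epsilon/2$, take the positive part, feed the resulting function into Lemma~\ref{generalized version of thm 1.2}, and translate back. The only cosmetic difference is that the paper additionally divides by $1+\epsilon$ in defining $h$, whereas you rely directly on the uniform Siegel--Walfisz bound $g(b,N)<1+\epsilon/2$ to force $h(b)<1$; your version is in fact slightly cleaner and yields exactly the needed inequality $\sum g(b_i,N)>4(1+\epsilon)$ without slack.
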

			\begin{proof}
				For $ b\in \mathbb{Z}_{W}^{\ast} $, define\begin{center}
					$ h(b):=\max\bigg( 0, \dfrac{1}{1+\epsilon}\big( g(b,N)-\epsilon/2 \big) \bigg) $.
				\end{center}
				Since $ \mathbb{E}_{n\in [N]}\nu_{b}(n)=1+o(1) $, we know that $ h(b)\in [0,1) $ provided that $N$ is large enough in terms of $\epsilon$. On the other hand, by Lemma \ref{lower bound}, we have\begin{center}
					$ \mathbb{E}_{b\in \mathbb{Z}_{W}^{\ast}}h(b)\geq \dfrac{1}{1+\epsilon}\mathbb{E}_{b\in \mathbb{Z}_{W}^{\ast}}\big( g(b,N)-\epsilon/2 \big)>1/2 $.
				\end{center}
				Therefore, by Lemma \ref{generalized version of thm 1.2}, for all $ n \in \mathbb{Z}_{W} $ with $ n \equiv 0 \ (\bmod\ 2) $, there exist $ b_{1},\ldots,b_{8}\in \mathbb{Z}_{W}^{\ast} $ such that $ n\equiv b_{1}+\cdots+b_{8} \ (\bmod \ W), \ h(b_{i})>0  $ for all $ i\in \{1,\ldots,8\} $ and
				\begin{center}
					$ h(b_{1})+\cdots+h(b_{8})>4. $
				\end{center}
				By the definition of $h$, we know that $ g(b_{i},N)>\epsilon/2 $ for all $ i \in \{1,\ldots,8\} $ and 
				\begin{center}
					$ g(b_{1},N)+\cdots+g(b_{8},N)>4(1+\epsilon)$.
				\end{center}
			\end{proof}

			\section{Pseudorandomness}
			
			The Fourier transform of a finitely supported function $ f:\mathbb{Z}\rightarrow \mathbb{C} $ is defined by
			\begin{center}
				$\begin{aligned} \widehat{f}(\alpha)=\sum\limits_{n\in \mathbb{Z}}f(n)e(n \alpha) \end{aligned}$ .
			\end{center}
			For a function $ f:\mathbb{Z}\rightarrow \mathbb{C} $, the $ L^{r}$-norm is
			defined by 
			\begin{center}
				$\begin{aligned} ||f||_{r}=\bigg( \sum\limits_{n}|f(n)|^{r} \bigg)^{1/r}\end{aligned} $.
			\end{center}
			In this section, we use the circle method to prove the pseudorandomness of $ \nu_{b} $.
			
			\begin{proposition}\label{pseudorandomness}
				Let $ \alpha \in \mathbb{T} $ . For $ b\in [W] $ with $ (b,W)=1 $, we have
				\begin{center}
					$ \mid \widehat{\nu}_{b}(\alpha) - \widehat{1_{[N]}}(\alpha) \mid =o(N). $
				\end{center}
			\end{proposition}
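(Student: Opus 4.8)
The plan is to prove this estimate by the circle method, comparing $\widehat{\nu}_b$ and $\widehat{1_{[N]}}$ on a major/minor arc decomposition; the whole point is that the $W$-trick built into the definition of $\nu_b$ removes exactly the major-arc biases that would otherwise separate a prime-counting function from $1_{[N]}$. First I would reduce to von Mangoldt weights: the $n\in[N]$ for which $Wn+b$ is a proper prime power contribute $\ll\sqrt{WN}\,\log(WN)=o(N)$ to $\widehat{\nu}_b(\alpha)$ (recall $W=(\log N)^{o(1)}$), so it suffices to show
\[
\frac{\varphi(W)}{W}\sum_{n\le N}\Lambda(Wn+b)\,e(n\alpha)=\sum_{n\le N}e(n\alpha)+o(N)
\]
uniformly in $\alpha\in\mathbb{T}$ and in $b\in[W]$ with $(b,W)=1$. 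Fix a large constant $B$, set $Q:=N(\log N)^{-B}$, and use Dirichlet's theorem to write $\alpha=a/q+\beta$ with $(a,q)=1$, $1\le q\le Q$ and $|\beta|\le 1/(qQ)$.

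The right-hand side $\widehat{1_{[N]}}(\alpha)=\sum_{n\le N}e(n\alpha)$ is already under control away from the origin: if $q\ge2$ then $\|\alpha\|\ge\|a/q\|-|\beta|\ge \tfrac1q-\tfrac1{qQ}\ge\tfrac1{2q}$, so $|\widehat{1_{[N]}}(\alpha)|\le\tfrac1{2\|\alpha\|}\le q\le Q=o(N)$. Hence for $q\ge2$ it suffices to prove $\widehat{\nu}_b(\alpha)=o(N)$, whereas for $q=1$ (where $\alpha=\beta$ is near an integer) I must recover the genuine main term $\widehat{1_{[N]}}(\beta)$.

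On the major arcs $q\le(\log N)^{B'}$ (with $B'$ a fixed constant, $0<B'<B$) I would apply the Siegel--Walfisz theorem to the moduli $Wq\le(\log N)^{B'+1}$: splitting $n$ into residues $r\bmod q$, using the prime number theorem for the progression $Wn+b\pmod{Wq}$ (whose residue class is coprime to $Wq$ precisely when $(Wr+b,q)=1$, since $(b,W)=1$ forces $(Wr+b,W)=1$), and summing against $e(n\beta)$ by partial summation, one obtains uniformly
\[
\widehat{\nu}_b(a/q+\beta)=\frac{\varphi(W)}{\varphi(Wq)}\,c_q(a)\sum_{n\le N}e(n\beta)+o(N),\qquad
c_q(a):=\sum_{\substack{r\bmod q\\ (Wr+b,q)=1}}e(ra/q);
\]
the accumulated error here is $\ll q(1+N|\beta|)\,WN\exp(-c\sqrt{\log N})=o(N)$ because $q,N|\beta|\ll(\log N)^{\max(B,B')}$. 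Now $c_q(a)$ is multiplicative in $q$, and its local factor at a prime $p\mid q$ is: the complete sum $\sum_{r\bmod p^{v}}e(ra_p/p^{v})=0$ if $p\mid W$ (no residue is excluded), while for $p\nmid W$ it vanishes unless $p$ divides $q$ exactly once, in which case it has modulus $1$. Therefore $c_q(a)=0$ unless $q$ is squarefree and coprime to $W$, and for such $q$ with $q\ge2$ we get $\tfrac{\varphi(W)}{\varphi(Wq)}|c_q(a)|=\tfrac1{\varphi(q)}\le\tfrac1{w-1}=o(1)$ since every prime factor of $q$ exceeds $w\to\infty$; hence $\widehat{\nu}_b(\alpha)=o(N)$ in this case. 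For $q=1$ one has $c_1=1$ and $\varphi(W\cdot1)=\varphi(W)$, so $\widehat{\nu}_b(\beta)=\sum_{n\le N}e(n\beta)+o(N)=\widehat{1_{[N]}}(\beta)+o(N)$, as required.

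It remains to treat the minor arcs $(\log N)^{B'}<q\le Q$, where I need only $\widehat{\nu}_b(\alpha)=o(N)$; this is the main obstacle. I would apply Vaughan's identity to $\sum_{n\le N}\Lambda(Wn+b)e(n\alpha)$: the Type~I sums reduce to expressions $\sum_{d\le D}(\log)\min(N/d,\|d\alpha\|^{-1})$ (the congruence $Wn+b\equiv0\pmod{d}$ pins $n$ to one residue class when $(d,W)=1$ and is empty otherwise), and the Type~II sums are handled by Cauchy--Schwarz together with the standard divisor-sum estimate; the outcome is a bound of the shape $\big(\tfrac N{\sqrt q}+N^{4/5}+\sqrt{Nq}\big)\,W^{O(1)}(\log N)^{O(1)}$, which is $o(N)$ once $B'$ is chosen large enough relative to the power of $W$ that appears (recall $W=(\log N)^{o(1)}$), using $(\log N)^{B'}<q\le N(\log N)^{-B}$. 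Combining the three ranges, and recalling $\widehat{1_{[N]}}(\alpha)=o(N)$ whenever $q\ge2$, yields $|\widehat{\nu}_b(\alpha)-\widehat{1_{[N]}}(\alpha)|=o(N)$ uniformly, which is the assertion. Alternatively, the minor-arc bound may be quoted directly from a standard estimate for exponential sums over primes in arithmetic progressions, the modulus $W$ being harmless here.
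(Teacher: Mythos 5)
Your proposal is correct and follows essentially the same route as the paper: a major/minor arc decomposition, Siegel--Walfisz plus partial summation on the major arcs with the $W$-trick forcing the local exponential sum to vanish (for $w$-smooth parts of $q$) or be $O(1/\varphi(q))=o(1)$ (for parts coprime to $W$, where $q>w$), and a standard Vinogradov/Vaughan bound for prime exponential sums in progressions on the minor arcs, which is exactly what the paper imports from Hua. The only cosmetic difference is that you evaluate the local sum $c_q(a)$ exactly as a Ramanujan-type sum of modulus $\le 1$, whereas the paper settles for the weaker bound $\ll q^{1/2+\epsilon}$, which suffices equally.
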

			 Suppose that $ \sigma_{0} $ is a large positive constant  and  $ \sigma $ is a much larger positive constant in terms of $ \sigma_{0} $. Let $ L=\log(WN+W)$. For $ 1 \leq q \leq L^{\sigma} $ and $ 0\leq a\leq q-1 $ with $(a,q)=1$, write 
			 \begin{center}
			 	$ \mathfrak{M}(q,a):=\{\alpha \in \mathbb{T}:|\alpha-a/q|\leq L^{\sigma}(WN)^{-1} \} $.
			 \end{center} 
			 
			Let $ \mathfrak{M}$ be the union of all these sets $\mathfrak{M}(q,a) $. 
			Put $ \mathfrak{m}=\mathbb{T}\setminus\mathfrak{M} $. We call $ \mathfrak{M} $ major arcs and $ \mathfrak{m} $ minor arcs.
			For any $ b \in [W] $ with $ (b,W)=1 $, we have 
			\begin{equation}\label{approximation equation}
				\begin{aligned}
					\widehat{\nu_{b}}(\alpha)
					&=\dfrac{\varphi(W)}{W}\sum\limits_{\substack{ W+b \leq p\leq  WN+b  \\ p\equiv b (\bmod W)}}e\big(\alpha (p-b)W^{-1}\big)\log p\\
					&=\dfrac{\varphi(W)}{W}\sum\limits_{\substack{ p\leq  WN+b  \\ p\equiv b (\bmod W)}}e\big(\alpha (p-b)W^{-1}\big)\log p+O(W)\\
					&=\dfrac{\varphi(W)e(-\alpha b/W)}{W}\sum\limits_{\substack{p\leq Y \\ p \equiv b (\bmod W)}}e(\alpha p/W)\log p+O(W),
				\end{aligned}
			\end{equation}
			where $ Y= WN+b $. 
			
			We begin by providing an estimate on the minor arcs.
			
			\begin{lemma}\label{minor arcs}
				Let $ \alpha \in \mathfrak{m} $. Then 
				\begin{center}
					$\widehat{\nu}_{b}(\alpha)-\widehat{1_{[N]}}(\alpha)\ll NL^{-\sigma_{0}} $.
				\end{center}
			\end{lemma}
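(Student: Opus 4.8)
The plan is to bound $\widehat{\nu}_{b}(\alpha)$ and $\widehat{1_{[N]}}(\alpha)$ separately on $\mathfrak{m}$ and then add. The term $\widehat{1_{[N]}}(\alpha)=\sum_{n=1}^{N}e(n\alpha)$ is the easy one: since $\alpha\in\mathfrak{m}$ we have $\alpha\notin\mathfrak{M}(1,0)$, hence $||\alpha||>L^{\sigma}(WN)^{-1}$, and therefore $|\widehat{1_{[N]}}(\alpha)|\le(2||\alpha||)^{-1}<WNL^{-\sigma}/2$. Since $\log W\sim w=\log\log\log n_{0}=o(\log\log n_{0})$ whereas $L=\log(WN+W)\sim\log n_{0}$, we get $W\le L$ once $n_{0}$ is large, and as $\sigma$ is much larger than $\sigma_{0}$ this gives $|\widehat{1_{[N]}}(\alpha)|\ll NL^{-\sigma_{0}}$. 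It thus remains to prove $\widehat{\nu}_{b}(\alpha)\ll NL^{-\sigma_{0}}$ for $\alpha\in\mathfrak{m}$.

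By the last line of (\ref{approximation equation}), together with $O(W)=O(L)\ll NL^{-\sigma_{0}}$, it suffices to show $\tfrac{\varphi(W)}{W}|T(\alpha)|\ll NL^{-\sigma_{0}}$, where $T(\alpha):=\sum_{p\le Y,\,p\equiv b\,(W)}e(\alpha p/W)\log p$ and $Y=WN+b$. Detecting the congruence by additive characters, $1_{p\equiv b\,(W)}=\tfrac1W\sum_{c=0}^{W-1}e_{W}(c(p-b))$, we obtain $T(\alpha)=\tfrac1W\sum_{c=0}^{W-1}e_{W}(-cb)\,S(\theta_{c})$, with $\theta_{c}:=(\alpha+c)/W$ and $S(\theta):=\sum_{p\le Y}e(\theta p)\log p$. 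Since $\varphi(W)/W\le1$ and an average is at most a maximum, $\tfrac{\varphi(W)}{W}|T(\alpha)|\le\max_{0\le c<W}|S(\theta_{c})|$; using $Y\asymp WN$ and $W\le L$, it then suffices to show $|S(\theta_{c})|\ll YL^{-\sigma_{0}-2}$ for every $c$.

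Fix $c$ and set $Q:=\lfloor WNL^{-2\sigma_{0}-14}\rfloor$. By Dirichlet's theorem choose coprime $a,q$ with $1\le q\le Q$ and $|\theta_{c}-a/q|\le(qQ)^{-1}$. The crucial observation is that $q$ must be large, say $q>L^{2\sigma_{0}+12}$: otherwise, writing $\alpha=W\theta_{c}-c=\tfrac{Wa-cq}{q}+W(\theta_{c}-a/q)$ and reducing $(Wa-cq)/q$ modulo $1$ to lowest terms $a'/r$, one finds $r\mid q$ (so $r\le L^{2\sigma_{0}+12}\le L^{\sigma}$) and $||\alpha-a'/r||\le W/Q\le L^{\sigma}(WN)^{-1}$ (the last step using $W\le L$ and $\sigma$ much larger than $\sigma_{0}$), forcing $\alpha\in\mathfrak{M}(r,a')\subseteq\mathfrak{M}$ and contradicting $\alpha\in\mathfrak{m}$. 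With $L^{2\sigma_{0}+12}<q\le Q$ and $|\theta_{c}-a/q|\le q^{-2}$, the classical Vinogradov-type estimate for exponential sums over primes (via Vaughan's identity) gives
$$S(\theta_{c})=\sum_{n\le Y}\Lambda(n)e(\theta_{c}n)+O(Y^{1/2}\log Y)\ll\bigl(Yq^{-1/2}+Y^{4/5}+(Yq)^{1/2}\bigr)(\log Y)^{4}+Y^{1/2}\log Y,$$
with $\Lambda$ the von Mangoldt function. Feeding in $q>L^{2\sigma_{0}+12}$ (first term), $q\le Q\le YL^{-2\sigma_{0}-14}$ (third term), $Y^{4/5}\ll YL^{-\sigma_{0}-100}$ and $(\log Y)^{4}\le L^{4}$ yields $|S(\theta_{c})|\ll YL^{-\sigma_{0}-2}$, which completes the proof.

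The step I expect to be the main obstacle is the pigeonholing in the last paragraph: one must convert the hypothesis that $\alpha$ lies on a minor arc into genuine non‑approximability of each dilated, shifted point $\theta_{c}=(\alpha+c)/W$ by rationals with small denominator, while carefully tracking the extra factor $W$ and the reduction of $(Wa-cq)/q$ to lowest terms. This is precisely what dictates the admissible size of $Q$ and of the exponent $2\sigma_{0}+12$ relative to $\sigma_{0}$ and $\sigma$. Once that is in place, the input on exponential sums over primes is entirely classical and is quoted off the shelf.
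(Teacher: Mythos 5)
Your proof is correct, but it takes a genuinely different route from the paper's. The paper applies Dirichlet's theorem directly to $\alpha$ (deducing $q>L^{\sigma}$ from $\alpha\in\mathfrak{m}$), separates the weight $\log p$ and the phase $e(\beta p/W)$ by partial summation, and then quotes Hua's Theorem 10 --- a Vinogradov-type bound for the exponential sum $\sum_{p\le t,\,p\equiv b\,(\bmod W)}e_{Wq}(ap)$ over primes restricted to an arithmetic progression --- following Chow. You instead strip off the congruence condition by orthogonality, replacing the single restricted sum in (\ref{approximation equation}) by an average of $W$ classical sums $S(\theta_{c})$ at the dilated frequencies $\theta_{c}=(\alpha+c)/W$, and then transfer the minor-arc hypothesis on $\alpha$ to each $\theta_{c}$ via the pigeonhole/reduction argument you flag, so that the off-the-shelf Vaughan--Vinogradov estimate applies. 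The two approaches trade the same difficulty in different places: the paper needs the arithmetic-progression version of the exponential-sum bound (with the uniformity in the modulus $Wq$ hidden inside the cited theorem), while you need only the classical bound but must do the Diophantine bookkeeping for the $W$ shifted points. Your numerology checks out: $W\le L$ for large $n_{0}$, the choice $Q=\lfloor WNL^{-2\sigma_{0}-14}\rfloor$ forces $q>L^{2\sigma_{0}+12}$ on pain of $\alpha\in\mathfrak{M}(r,a')$ with $r\le q\le L^{\sigma}$ and $\|\alpha-a'/r\|\le W/Q\le L^{\sigma}(WN)^{-1}$, and the three terms of the Vinogradov bound each come in below $YL^{-\sigma_{0}-2}$. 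Your treatment of $\widehat{1_{[N]}}(\alpha)$ is essentially the paper's (both rest on $\|\alpha\|\gg L^{\sigma}(WN)^{-1}$ for $\alpha\in\mathfrak{m}$).
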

			\begin{proof}
				By Dirichlet's approximation theorem, there exist relatively prime integers $q$ and $a$ such that $1\leq q\leq WN/L^{\sigma} $ and $|\beta|\leq L^{\sigma}(qWN)^{-1} $, where $ \beta=\alpha-a/q $. Since $ \alpha\in [0,1) $ and $ L^{\sigma}=o(WN) $, we must have $ a/q\in [0,1) $.
				Therefore, since $\alpha \notin \mathfrak{M}$, we must have $ q>L^{\sigma} $. It follows that $ |\beta|<(WN)^{-1} $.
				Define 
				\begin{center}
					$\begin{aligned} S(t)=\sum\limits_{\substack{p\leq t \\ p \equiv b (\bmod W)}}e_{Wq}(ap) \end{aligned} $
				\end{center}
				and 
				\begin{center}
					$f(t)=e(\beta t/W)\log t.$
				\end{center}
				When $ t\leq Y $, we have $ f^{\prime}(t)\ll (W^{2}N)^{-1}L $. Therefore, we have
				\begin{center}
					$ \begin{aligned}
						\int_{1}^{YL^{-2\sigma_{0}}}S(t)f'(t)dt\ll (YL^{-2\sigma_{0}})^{2}(W^{2}N)^{-1}L \ll NL^{-4\sigma_{0}+1}.
					\end{aligned} $
				\end{center}
				By partial summation, we have 
				\begin{center}
					$\begin{aligned}
						\sum\limits_{\substack{p\leq Y \\ p \equiv b (\bmod W)}}e(\alpha p/W)\log p&= \sum\limits_{\substack{p\leq Y \\ p \equiv b (\bmod W)}}e(\beta p/W)e_{Wq} (ap)\log p\\
						&=S(Y)f(Y)-\int_{1}^{Y}S(t)f'(t)dt\\
						&=S(Y)f(Y)-\int_{YL^{-2\sigma_{0}}}^{Y}S(t)f'(t)dt+O(NL^{-4\sigma_{0}+1}).
					\end{aligned}$
				\end{center}
				It follows from (\ref{approximation equation}) that \begin{equation}\label{minor arc 1}
					\widehat{\nu_{b}}(\alpha)\ll NL^{-4\sigma_{0}+1}+L\sup_{YL^{-2\sigma_{0}}\leq t\leq Y}|S(t)|. 
				\end{equation}
				As in  \cite[proof of Lemma 3.1]{Chow}, we use \cite[Theorem 10]{Hua} to obtain
				\begin{equation}\label{minor arc 2}
					S(t)\ll YL^{-\sigma_{0}-1}W^{-1} 
				\end{equation}
				for all $ YL^{-2\sigma_{0}}\leq t\leq Y $.
				By using Dirichlet's approximation theorem again, we know that there exist relatively prime integers $q$ and $a$ such that $1\leq q\leq L^{\sigma} $ and $|\alpha-a/q|\leq (qL^{\sigma})^{-1} $. Similarly, we have $ a/q\in [0,1) $. Note that $ \alpha \notin \mathfrak{M} $, we must have $ |\alpha-a/q|> L^{\sigma}(WN)^{-1} $. Obviously, in this case, we have $ ||q\alpha||=|q\alpha-a| $. Therefore,
				\begin{center}
					$ \widehat{1_{[N]}}(\alpha)\ll ||\alpha||^{-1}\leq \dfrac{q}{||q\alpha||}\leq \dfrac{WN}{L^{\sigma}}\ll NL^{-\sigma+1} $.
				\end{center}
				This, together with  (\ref{minor arc 1}) and (\ref{minor arc 2}), yields
				 \begin{center}
					$ \widehat{\nu}_{b}(\alpha)-\widehat{1_{[N]}}(\alpha)\ll NL^{-\sigma_{0}} $.
				\end{center}

			\end{proof}
			
			For $ b\in [W] $ with $(b,W)=1$, define
			\begin{center}
				$\begin{aligned}
					S_{q}^{\ast}(a,b)=\sum\limits_{\substack{r=0 \\ (b+Wr,q)=1}}^{q-1}e(ar/q)
				\end{aligned}$.
			\end{center}
			Additionally, define
			\begin{center}
				$ \begin{aligned}
					I(\beta)=\int_{0}^{N}e(\beta t)dt.
				\end{aligned}  $
			\end{center}
			
			Next, we provide the asymptotic formula for $ \widehat{\nu_{b}}(\alpha) $ on the major arcs.
			
			\begin{lemma}\label{asymptotic formula on major arcs}
				Let $ \alpha \in \mathfrak{M}(q,a) $ and $ \beta =\alpha-a/q \in [-L^{\sigma}(WN)^{-1},L^{\sigma}(WN)^{-1}] $. Then
				\begin{center}
					$\widehat{\nu_{b}}(\alpha)=\dfrac{\varphi(W)}{\varphi(Wq)}S_{q}^{\ast}(a,b)I(\beta)+O(Ne^{-C_{2}\sqrt{L}}) $.
				\end{center}
			\end{lemma}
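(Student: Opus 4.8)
The plan is to feed the approximate identity (\ref{approximation equation}) into a standard major-arc dissection. Starting from
$$\widehat{\nu_b}(\alpha)=\frac{\varphi(W)e(-\alpha b/W)}{W}\sum_{\substack{p\leq Y\\ p\equiv b\ (\bmod\ W)}}e(\alpha p/W)\log p+O(W),$$
I would write $\alpha=a/q+\beta$ and factor $e(\alpha p/W)=e_{Wq}(ap)\,e(\beta p/W)$. Since $e_{Wq}(ap)$ depends only on $p\bmod Wq$, split the progression $p\equiv b\ (\bmod\ W)$ into the $q$ subprogressions $p\equiv b+Wr\ (\bmod\ Wq)$ with $0\leq r\leq q-1$. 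For a prime $p>Wq$ one has $(p,Wq)=1$, and since $(b,W)=1$ this already forces $(b+Wr,W)=1$; hence the only surviving subprogressions are those with $(b+Wr,q)=1$, the primes $p\mid q$ dropped here contributing at most $\log q\ll\log L$. On each surviving subprogression, $e_{Wq}(ap)=e_{Wq}(ab)\,e_q(ar)$ is constant.

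Next, on a subprogression $p\equiv c\ (\bmod\ Wq)$ with $(c,Wq)=1$, I would invoke the Siegel--Walfisz theorem: since $Wq\leq WL^{\sigma}$ and $\log W\sim\log\log\log n_0$, the modulus is smaller than any fixed power of $L=\log(WN+W)$, so $\theta(t;Wq,c)=t/\varphi(Wq)+O(te^{-c_1\sqrt{\log t}})$ uniformly for $2\leq t\leq Y$. Writing $\sum_{p\leq Y,\ p\equiv c\,(\bmod\,Wq)}e(\beta p/W)\log p=\int_{2^-}^{Y}e(\beta t/W)\,d\theta(t;Wq,c)$ and integrating by parts, then substituting $t=Ws$ (using $Y/W=N+O(1)$ and $|\beta|\leq L^{\sigma}(WN)^{-1}$), yields
$$\sum_{\substack{p\leq Y\\ p\equiv c\ (\bmod\ Wq)}}e(\beta p/W)\log p=\frac{W}{\varphi(Wq)}I(\beta)+O\!\left(\frac{W}{\varphi(Wq)}\right)+O\!\left(L^{\sigma}Ne^{-c_1'\sqrt{L}}\right),$$
where $Y\asymp WN$, $\log Y=L+O(1)$, and the routine bound $\int_2^Y te^{-c_1\sqrt{\log t}}\,dt\ll Y^2e^{-c_1'\sqrt{\log Y}}$ control the error terms coming from $\theta(Y;Wq,c)$ and from the integral of its error.

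Summing over the $\leq q$ surviving subprogressions, the restricted character sum $\sum_r e_q(ar)$ over $0\leq r\leq q-1$ with $(b+Wr,q)=1$ is exactly $S_q^{\ast}(a,b)$, and the prefactor phase collapses via $e(-\alpha b/W)\,e_{Wq}(ab)=e(-\beta b/W)=1+O(|\beta|)$. Reinserting the factor $\varphi(W)/W$ from (\ref{approximation equation}) produces the main term $\frac{\varphi(W)}{\varphi(Wq)}S_q^{\ast}(a,b)I(\beta)$, while the accumulated error is a sum of: the $O(W)$ from (\ref{approximation equation}); the Siegel--Walfisz error $\ll qL^{\sigma}Ne^{-c_1'\sqrt{L}}\ll L^{2\sigma}Ne^{-c_1'\sqrt{L}}$; the main-term-adjacent term $\ll\frac{qW}{\varphi(Wq)}\ll\frac{W}{\varphi(W)}\cdot\frac{q}{\varphi(q)}$, which is only polylogarithmic in $n_0$; the fluctuation $|e(-\beta b/W)-1|\,|S_q^{\ast}(a,b)|\,|I(\beta)|\ll L^{2\sigma}/W$; and the $O(\log L)$ from small prime divisors of $q$. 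Since $W$, $q\leq L^{\sigma}$, and $L\asymp\log n_0$ are subpolynomial in $n_0$ while $N\asymp n_0/W$, each of these is $O(Ne^{-C_2\sqrt{L}})$ once $C_2$ is chosen below $c_1'$ and $n_0$ is large.

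The only genuinely delicate point I anticipate is the uniformity of Siegel--Walfisz in the modulus $Wq$, i.e.\ confirming that $Wq$ stays within the admissible range $(\log Y)^{A}$ — which holds because $W=(\log\log n_0)^{1+o(1)}$ and $q\leq L^{\sigma}$ — together with the bookkeeping that, after multiplication by $\varphi(W)/W$ and summation over the $q$ subprogressions, the total error still beats $Ne^{-C_2\sqrt{L}}$. Everything else is routine partial summation, and this type of major-arc computation is by now standard (cf.\ the treatment in \cite{Chow}).
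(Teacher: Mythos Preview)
Your proposal is correct and follows essentially the same standard major-arc computation as the paper: split the sum into residue classes modulo $Wq$, apply Siegel--Walfisz on each class, and convert to the integral $I(\beta)$ via partial summation, with the phase identity $e(-\alpha b/W)e_{Wq}(ab)=e(-\beta b/W)$ collapsing to $1+O(|\beta|b/W)$. The only organizational difference is that the paper performs partial summation first (on the full sum, with weight $f(t)=e(\beta t/W)\log t$) and then applies Siegel--Walfisz to the resulting partial-sum function $S(t)$ via $\mathrm{Li}$, whereas you split into subprogressions first and then integrate by parts against $d\theta(t;Wq,c)$ on each; the two orderings are equivalent and the error bookkeeping is the same.
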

			\begin{proof}
				With $ f(t)=e(\beta t/W)\log t $ and $ S(t)=\sum\limits_{\substack{p\leq t \\ p \equiv b (\bmod W)}}e_{Wq}(ap)  $, we have 
				\begin{equation}\label{major 1}
					\begin{aligned}  \sum\limits_{\substack{p\leq Y \\ p \equiv b (\bmod W)}}e(\alpha p/W)\log p
						&=\sum\limits_{n=2}^{Y}(S(n)-S(n-1))f(n)\\
						&=S(Y)f(Y+1)+\sum\limits_{n=2}^{Y}S(n)(f(n)-f(n+1)).
					\end{aligned}  
				\end{equation}
				Let
				\begin{center}
					$\begin{aligned} V_{q}(a,b)=\sum\limits_{\substack{r=0 \\ (b+Wr,q)=1}}^{q-1}e_{Wq}(a(b+Wr)).\end{aligned} $
				\end{center}
				
				Note that $ Wq\leq L^{\sigma+1} $ and $ n\leq W(N+1) $. We use the Siegel-Walfisz theorem \cite[Lemma 7.14]{Hua} to obtain
				\begin{equation}\label{major 2}
					\begin{aligned}
						S(n)
						&=\sum\limits_{\substack{r=0 \\ (b+Wr,Wq)=1}}^{q-1}e_{Wq}(a(b+Wr))\sum\limits_{\substack{p\leq n\\ p \equiv b+Wr (\bmod Wq)}}1+O(Wq)\\
						&=\dfrac{\mathtt{Li}(n)}{\varphi(Wq)}V_{q}(a,b)+O\big((WN+W)e^{-C_{1}\sqrt{L}}\big).
					\end{aligned} 
				\end{equation}
				Note that 	$f(n)-f(n+1)\ll (W^{2}N)^{-1}L^{\sigma+1}$ for 
				$ n\leq Y $. Substituting (\ref{major 2}) into (\ref{major 1}), we have
				\begin{center}
					$\begin{aligned}
						\sum\limits_{\substack{p\leq Y \\ p \equiv b (\bmod W)}}e(\alpha p/W)\log p&=\dfrac{V_{q}(a,b)}{\varphi(Wq)}\bigg[\mathtt{Li}(Y)f(Y+1)+\sum\limits_{n=2}^{Y}\mathtt{Li}(n)(f(n)-f(n+1))  \bigg]\\
						&\ \ \ +O(Ne^{-C_{2}\sqrt{L}})\\
						&=\dfrac{V_{q}(a,b)}{\varphi(Wq)}\bigg[\mathtt{Li}(Y)f(Y+1)+\sum\limits_{n=3}^{Y}\mathtt{Li}(n)(f(n)-f(n+1))  \bigg]\\
						&\ \ \ +O(Ne^{-C_{2}\sqrt{L}})\\
						&=\dfrac{V_{q}(a,b)}{\varphi(Wq)}\sum\limits_{n=3}^{Y}\int_{n-1}^{n}\dfrac{f(n)}{\log t}dt+O(Ne^{-C_{2}\sqrt{L}}).
					\end{aligned}$
				\end{center}
				
				Note that $ f(n)=f(t)+O(W^{-2}N^{-1}L^{\sigma+1}) $ for $  n-1\leq t\leq n $.
				Therefore, we have 
				\begin{equation}\label{major 3}
					\begin{aligned}
						\sum\limits_{\substack{p\leq Y \\ p \equiv b (\bmod W)}}e(\alpha p/W)\log p&=\dfrac{V_{q}(a,b)}{\varphi(Wq)}
						\int_{2}^{Y}e(\beta t/W)dt+O(Ne^{-C_{2}\sqrt{L}})\\
						&=\dfrac{W}{\varphi(Wq)}V_{q}(a,b)I(\beta)+O(Ne^{-C_{2}\sqrt{L}}). 
					\end{aligned}
				\end{equation}
				Substituting (\ref{major 3}) into (\ref{approximation equation}), we have 
				\begin{center}
					$\begin{aligned} \widehat{\nu_{b}}(\alpha)&=\dfrac{\varphi(W)e(-\alpha b/W)}{\varphi(Wq)}V_{q}(a,b)I(\beta)+O(Ne^{-C_{2}\sqrt{L}})\\
						&=\dfrac{\varphi(W)e(-\beta b/W)}{\varphi(Wq)}S_{q}^{\ast}(a,b)I(\beta)+O(Ne^{-C_{2}\sqrt{L}}).
					\end{aligned}$
				\end{center}
				This, together with the estimate
				\begin{center}
					$ \begin{aligned}
						e(-\beta b/W)I(\beta)=\int_{0}^{N}e(\beta(t-b/W))dt=I(\beta)+O(1), \end{aligned}$
				\end{center}
				yields Lemma \ref{asymptotic formula on major arcs}.
			\end{proof}
			
			When $\alpha\in \mathfrak{M}(1,0) $, it follows from Lemma \ref{asymptotic formula on major arcs} that \begin{center}
				$ \widehat{\nu_{b}}(\alpha)=I(\alpha)+O(Ne^{-C_{2}\sqrt{L}}) $.
			\end{center}
			As in \cite[Section 4]{Chow}, we use the Euler-Maclaurin summation to obtain 
			\begin{center}
				$ \widehat{1_{[N]}}(\alpha)=I(\alpha)+O(L^{\sigma}) $.
			\end{center}
			Therefore, when $\alpha\in \mathfrak{M}(1,0) $, we have 
			\begin{equation}\label{major arc q=1}
				\widehat{\nu}_{b}(\alpha)-\widehat{1_{[N]}}(\alpha)\ll Ne^{-C_{2}\sqrt{L}}  .
			\end{equation}
			
			Finally, for $2\leq q\leq L^{\sigma}$, let $ \alpha \in \mathfrak{M}(q,a) $  and $ \beta =\alpha-a/q \in [-L^{\sigma}(WN)^{-1},\ L^{\sigma}(WN)^{-1}] $.
			
			Let $ q=uv $, where $ u $ is $ w$-$smooth $ and $ (v,W)=1 $. Note that $ (u,v)=1 $. There are $ \bar{u},\ \bar{v}\in \mathbb{Z} $ such that $ u\bar{u}+v\bar{v}=1 $. Therefore, we have 
			\begin{center}
				$\begin{aligned}
					S_{q}^{\ast}(a,b)
					&=\sum\limits_{\substack{r_{1}=0 \\ (b+Wr_{1}v,u)=1}}^{u-1}\sum\limits_{\substack{r_{2}=0 \\ (b+Wr_{2}u,v)=1}}^{v-1}e_{uv}\big(a(u\bar{u}+v\bar{v})(r_{1}v+r_{2}u) \big)\\
					&=\sum\limits_{\substack{r_{1}=0 \\ (b+Wr_{1}v,u)=1}}^{u-1}e_{u}(a\bar{v}r_{1}v) \sum\limits_{\substack{r_{2}=0 \\ (b+Wr_{2}u,v)=1}}^{v-1}e_{v}(a\bar{u}r_{2}u)\\
					&=\sum\limits_{\substack{r_{1}=0 \\ (b+Wr_{1},u)=1}}^{u-1}e_{u}(a_{1}r_{1})\sum\limits_{\substack{r_{2}=0 \\ (b+Wr_{2},v)=1}}^{v-1}e_{v}(a_{2}r_{2}),
				\end{aligned}$
			\end{center}
			where $ a_{1}\equiv a\bar{v} \ (\bmod \ u)  $	and $ a_{2}\equiv a\bar{u} \ (\bmod \ v) $.
			Therefore, we have
			\begin{equation}\label{decomposition}
				S_{q}^{\ast}(a,b)=S_{u}^{\ast}(a_{1},b)S_{v}^{\ast}(a_{2},b).
			\end{equation}	
			Clearly, the coprimality condition in the expression for $ S_{u}^{\ast}(a_{1},b) $ can be removed.
			Note that $ (a_{1},u)=1 $. Therefore,  \begin{equation}\label{decomposition 1}
				\begin{aligned}
					S_{u}^{\ast}(a_{1},b)=\sum\limits_{r_{1}=0}^{u-1}e(a_{1}r_{1}/u)=\begin{cases}
						1 \quad if \  u=1,   \\
						0  \quad  if\ u>1. 
						\end {cases}  \end{aligned}  
				\end{equation}
				Since $(v,W)=1$, let $\overline{W}\in \mathbb{Z}$ be such that $W\overline{W}\equiv 1\ (\bmod \ v) $. By substituting the variable $ t=b\overline{W}+r $, we have 
				\begin{center}
					$\begin{aligned}
						e_{Wv}(a_{2}b)S_{v}^{\ast}(a_{2},b)=\sum\limits_{\substack{r=0 \\ (b+Wr,v)=1}}^{v-1}e_{v}(a_{2}(b+Wr)/W)=	\sum\limits_{\substack{t=0 \\ (t,v)=1}}^{v-1}e_{v}(a_{2}t).
					\end{aligned}$
				\end{center}
				Note that $(a_{2},v)=1$. Therefore, by \cite[Lemma 8.5]{Hua}, we know that \begin{equation}\label{decomposition 2}
					S_{v}^{\ast}(a_{2},b)\ll v^{{1/2}+\epsilon} .
				\end{equation}

				\begin{lemma}\label{major arc case}
					For $ 2\leq q\leq L^{\sigma} $, let $ \alpha \in \mathfrak{M}(q,a) $.
					Then 
					\begin{center}
						$ \widehat{\nu_{b}}(\alpha)-\widehat{1_{[N]}}(\alpha)\ll_{\epsilon} w^{\epsilon-1/2}N $
					\end{center}
					for any $ \epsilon>0 $.
				\end{lemma}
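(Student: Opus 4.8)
The plan is to combine the asymptotic formula of Lemma~\ref{asymptotic formula on major arcs} with the multiplicative decomposition of $S_q^{\ast}(a,b)$ recorded in $(\ref{decomposition})$, $(\ref{decomposition 1})$ and $(\ref{decomposition 2})$. We may assume $\epsilon<1/2$, since for $\epsilon\ge1/2$ the bound is trivial: $\widehat{\nu_b}(\alpha)\ll N\,\mathbb{E}_{n\in[N]}\nu_b(n)\ll N$ and $\widehat{1_{[N]}}(\alpha)\ll N$, while $w^{\epsilon-1/2}\ge1$. As above, write $q=uv$ with $u$ the $w$-smooth part of $q$ and $(v,W)=1$.

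First I would dispose of the term $\widehat{1_{[N]}}(\alpha)$. Since $\alpha=a/q+\beta$ with $1\le a\le q-1$, $(a,q)=1$ and $|\beta|\le L^{\sigma}(WN)^{-1}$, we have $\|\alpha\|\ge1/q-|\beta|\ge1/(2q)$ once $n_0$ is large, so $\widehat{1_{[N]}}(\alpha)\ll\|\alpha\|^{-1}\ll q\le L^{\sigma}$. Recalling that $\log W=\vartheta(w)\sim w$, so $W=(\log\log n_0)^{1+o(1)}$, $N\asymp n_0/W=n_0^{1-o(1)}$ and $L\asymp\log n_0$, the target $w^{\epsilon-1/2}N$ has size $n_0^{1-o(1)}$; hence $L^{\sigma}$ — a fixed power of $\log n_0$ — and any error term of the shape $Ne^{-C\sqrt L}$ are negligible against it, and it suffices to estimate $\widehat{\nu_b}(\alpha)$.

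If $u>1$, then $(\ref{decomposition})$ and $(\ref{decomposition 1})$ give $S_q^{\ast}(a,b)=S_u^{\ast}(a_1,b)S_v^{\ast}(a_2,b)=0$, so Lemma~\ref{asymptotic formula on major arcs} yields $\widehat{\nu_b}(\alpha)\ll Ne^{-C_2\sqrt L}$, which is acceptable. In the remaining case $u=1$ we have $(q,W)=1$, so every prime factor of $q$ exceeds $w$ (in particular $q>w$), and $\varphi(Wq)=\varphi(W)\varphi(q)$, whence $\frac{\varphi(W)}{\varphi(Wq)}=1/\varphi(q)$. Using $|I(\beta)|\le N$ together with $|S_q^{\ast}(a,b)|=|S_v^{\ast}(a_2,b)|\ll_{\epsilon}q^{1/2+\epsilon}$ from $(\ref{decomposition 2})$, Lemma~\ref{asymptotic formula on major arcs} gives $\widehat{\nu_b}(\alpha)\ll_{\epsilon}\frac{q^{1/2+\epsilon}}{\varphi(q)}N+Ne^{-C_2\sqrt L}$, so it remains to prove $\frac{q^{1/2+\epsilon}}{\varphi(q)}\ll w^{\epsilon-1/2}$. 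I would expand over prime powers:
\begin{center}
$\dfrac{q^{1/2+\epsilon}}{\varphi(q)}=\prod_{p^{a}\,\|\,q}\dfrac{p}{p-1}\,p^{a(\epsilon-1/2)}\ \le\ \prod_{p\,\mid\,q}\dfrac{p^{1/2+\epsilon}}{p-1}\ \le\ \prod_{p\,\mid\,q}2p^{\epsilon-1/2},$
\end{center}
and since every $p\mid q$ satisfies $p>w$, each factor here is at most $2w^{\epsilon-1/2}<1$ once $n_0$ is large; as $q\ge2$ there is at least one factor, so the product is $\le2w^{\epsilon-1/2}$, which is what we need.

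The step requiring the most care is this last bound for $q^{1/2+\epsilon}/\varphi(q)$. The standard estimate $\varphi(q)\gg q/\log\log q$ is \emph{not} sufficient, because $\log\log q$ can be of the same order as $w$ and would only yield the useless factor $w^{1/2+\epsilon}$; the decisive point is that in the one case that contributes ($u=1$) \emph{every} prime divisor of $q$ exceeds $w$, so each Euler factor of $q^{1/2+\epsilon}/\varphi(q)$ is individually less than $1$, and retaining a single one already produces the saving $w^{\epsilon-1/2}$. Everything else — the treatment of $\widehat{1_{[N]}}(\alpha)$ and the comparison of $L^{\sigma}$ and $Ne^{-C\sqrt L}$ with $w^{\epsilon-1/2}N$ — is routine once the relative sizes of $w$, $W$, $N$ and $L$ are in hand.
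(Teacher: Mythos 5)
Your proof is correct and follows essentially the same route as the paper's: the same case split on the $w$-smooth part $u$ of $q$, the same application of Lemma~\ref{asymptotic formula on major arcs} combined with (\ref{decomposition}), (\ref{decomposition 1}), (\ref{decomposition 2}), and the same bound $\widehat{1_{[N]}}(\alpha)\ll L^{\sigma}$ via $\|\alpha\|\gg L^{-\sigma}$. The only (harmless) difference is in the final step: the paper passes from $q^{1/2+\epsilon}/\varphi(q)$ to $q^{2\epsilon-1/2}\leq w^{2\epsilon-1/2}$ using $\varphi(q)\gg_{\epsilon}q^{1-\epsilon}$ together with $q>w$, whereas you extract the same saving from the Euler product by noting that every prime factor of $q$ exceeds $w$; both are valid, and your remark that the cruder bound $\varphi(q)\gg q/\log\log q$ would not suffice is accurate.
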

				\begin{proof}
					First, we prove that $ \widehat{\nu_{b}}(\alpha)\ll_{\epsilon} w^{\epsilon-1/2}N $	.
					When $ u>1 $, it follws from (\ref{decomposition}), (\ref{decomposition 1}) and Lemma \ref{asymptotic formula on major arcs} that 
					\begin{center}
						$ \widehat{\nu_{b}}(\alpha)\ll Ne^{-C_{2}\sqrt{L}} $.
					\end{center} 	
					Therefore, we assume that $u=1$. In this case, we have $q=v$. Note that $ (v,W)=1 $. Therefore, we have $q>w$. By (\ref{decomposition}) , (\ref{decomposition 2}) and Lemma \ref{asymptotic formula on major arcs}, we have 
					\begin{center}
						$\begin{aligned}
							\widehat{\nu_{b}}(\alpha)&\ll\dfrac{\varphi(W)}{\varphi(Wq)}N|S_{q}^{\ast}(a,b)|+Ne^{-C_{2}\sqrt{L}}\\	&\ll \dfrac{N}{\varphi(q)}q^{1/2+\epsilon}+Ne^{-C_{2}\sqrt{L}}\\
							&\ll Nq^{-1/2+2\epsilon}+Ne^{-C_{2}\sqrt{L}}\\
							&\ll w^{2\epsilon-1/2}N.
						\end{aligned} 
						$
					\end{center}
					Next, we estimate  $ \widehat{1_{[N]}}(\alpha) $. Since \begin{center}
						$ ||\alpha||=||a/q+\beta||\geq ||a/q||-||\beta||\geq q^{-1}-|\beta|\geq q^{-1}-L^{\sigma}/(WN)\gg L^{-\sigma} $,
					\end{center} 
					we have \begin{center}
						$ \widehat{1_{[N]}}(\alpha)\ll ||\alpha||^{-1}\ll L^{\sigma} $.
					\end{center}
					This, together with our estimate for $ \widehat{\nu_{b}}(\alpha) $, yields that 
					\begin{center}
						$ \widehat{\nu_{b}}(\alpha)-\widehat{1_{[N]}}(\alpha)\ll_{\epsilon} w^{\epsilon-1/2}N $.
					\end{center}

				\end{proof}
				
				Combining Lemma \ref{minor arcs}, Lemma \ref{major arc case} and (\ref{major arc q=1}), we obtain Proposition \ref{pseudorandomness} .
				
				\section{The restriction estimate}
				In this section, we will establish the restriction estimate for $ f_{b} $.
				
				\begin{proposition}\label{restriction estimate}
					Let $ \rho>4 $. For $ b\in [W] $ with $ (b,W)=1 $, we have
					\begin{center}
						$\begin{aligned}
							\int_{\mathbb{T}}|\widehat{f_{b}}(\alpha)|^{\rho}d\alpha\ll_{\rho} N^{\rho-1}.
						\end{aligned}  $
					\end{center}
				\end{proposition}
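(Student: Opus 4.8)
The plan is to establish the restriction estimate by the standard Green--Tao type argument, using the pseudorandomness information from Proposition \ref{pseudorandomness} together with a combinatorial/epsilon-removal-free approach via the dyadic pigeonholing on the level sets of $\widehat{f_b}$. First I would note that since $f_b \le \nu_b$ pointwise and $\|\nu_b\|_1 = \sum_{n\in[N]}\nu_b(n) = (1+o(1))N$, we have $\widehat{f_b}(\alpha) \ll N$ trivially, and by Proposition \ref{pseudorandomness} together with the elementary bound $\widehat{1_{[N]}}(\alpha) \ll \min(N, \|\alpha\|^{-1})$ we get the crucial majorant-type control $\widehat{\nu_b}(\alpha) \ll \min(N, \|\alpha\|^{-1}) + o(N)$ on all of $\mathbb{T}$. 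The point is to upgrade this into an $L^\rho$ bound for $\widehat{f_b}$ for every fixed $\rho > 4$.

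The key step is the restriction argument itself. I would follow Bourgain's method as adapted to the primes (cf. Green--Tao, or Chow's paper already cited in the excerpt). Set $F = \widehat{f_b}$. For a threshold parameter, decompose $\mathbb{T}$ into level sets $R_j = \{\alpha : N/2^{j+1} < |F(\alpha)| \le N/2^j\}$ for $j \ge 0$; since $|F(\alpha)| \le \|f_b\|_1 \ll N$, every $\alpha$ lies in some $R_j$, and it suffices to show $|R_j| \ll_\rho 2^{j(\rho')}N^{-1}$ for a suitable exponent — more precisely that $\int_{R_j}|F|^\rho \ll_\rho N^{\rho-1}2^{-cj}$ for some $c>0$, which then sums. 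To bound $|R_j|$ one uses the large-sieve-type / Hardy--Littlewood majorant idea: on $R_j$ one has $\sum_{\alpha \in \Lambda_j}|F(\alpha)|^2 \gg |\Lambda_j| (N/2^{j+1})^2$ for a maximal $(qWN)^{-1}$-separated... actually a $\delta$-separated subset $\Lambda_j \subseteq R_j$, and then one estimates $\sum_{\alpha\in\Lambda_j}|\widehat{f_b}(\alpha)|^2$ from above by Gallagher's lemma / the large sieve combined with the bound $\widehat{f_b} \le $ majorant governed by $\nu_b$; alternatively one runs the "$TT^*$" Tomas--Stein argument using that $\widehat{\nu_b}$ is close to $\widehat{1_{[N]}}$, whose square $\widehat{1_{[N]}}^2 = \widehat{1_{[N]}*1_{[N]}}$ has an explicit triangular-function Fourier support allowing a clean bound on $\int |\widehat{f_b}|^4$ up to logarithmic losses, and then interpolates with the trivial $L^\infty$ bound $|\widehat{f_b}| \le \|f_b\|_1 \ll N$ and the $L^2$ identity $\int|\widehat{f_b}|^2 = \sum f_b(n)^2 \ll N\log N$. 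Concretely: an $L^4$-type estimate with a log loss plus the $L^\infty$ bound interpolate to give $\int_{\mathbb{T}}|\widehat{f_b}|^\rho \ll_\rho N^{\rho-1}$ for every $\rho>4$, the logarithmic factor being absorbed since $\rho$ is strictly larger than $4$.

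In more detail on the $L^4$ input: $\int_{\mathbb{T}}|\widehat{f_b}(\alpha)|^4\,d\alpha$ counts weighted solutions of $n_1+n_2 = n_3+n_4$ with $Wn_i+b$ prime in $A$, i.e. it is $\sum_{n_1+n_2=n_3+n_4} f_b(n_1)f_b(n_2)f_b(n_3)f_b(n_4) \le \sum_{n_1+n_2=n_3+n_4}\nu_b(n_1)\nu_b(n_2)\nu_b(n_3)\nu_b(n_4) = \int_{\mathbb{T}}|\widehat{\nu_b}(\alpha)|^4\,d\alpha$. Using $\widehat{\nu_b}(\alpha) = \widehat{1_{[N]}}(\alpha) + o(N)$ is not by itself enough (the $o(N)$ term, integrated in $L^4$, could be as large as $o(N)\cdot N\log N$ and one has to check it is genuinely $o(N^3)$), so I would instead split into major and minor arcs: on the minor arcs use Lemma \ref{minor arcs} to get a strong pointwise saving $\widehat{\nu_b}(\alpha)\ll NL^{-\sigma_0}$ combined with the $L^2$ bound $\int|\widehat{\nu_b}|^2 \ll N\log N$, giving a contribution $\ll N^2 L^{-2\sigma_0}\cdot N\log N = o(N^3)$; on the major arcs use the asymptotic formulas (Lemma \ref{asymptotic formula on major arcs}, Lemma \ref{major arc case}, and \eqref{major arc q=1}) to compare with $\int|\widehat{1_{[N]}}|^4 \ll N^3\log N$ plus the singular-series over $q\ge 2$, which converges due to \eqref{decomposition 1}--\eqref{decomposition 2}. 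This yields $\int_{\mathbb{T}}|\widehat{f_b}|^4 \ll N^3 \log N$.

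Finally, interpolation: for $\rho>4$ write, by Hölder with the pointwise bound $|\widehat{f_b}(\alpha)|\le \|f_b\|_1 \ll N$,
\begin{center}
$\displaystyle \int_{\mathbb{T}}|\widehat{f_b}(\alpha)|^{\rho}\,d\alpha \le (CN)^{\rho-4}\int_{\mathbb{T}}|\widehat{f_b}(\alpha)|^{4}\,d\alpha \ll_\rho N^{\rho-4}\cdot N^{3}\log N = N^{\rho-1}\log N,$
\end{center}
which loses a logarithm. To remove it one instead runs the dyadic level-set argument above: for the bad level set $R_j$ with $|\widehat{f_b}|\asymp N2^{-j}$, a $TT^*$/large-sieve bound gives $|R_j| \ll 2^{2j}N^{-1}\log N$ up to an endpoint, but a slightly more careful argument (using that the number of major-arc-type frequencies where $|\widehat{f_b}|$ is large is genuinely bounded, i.e. a Bourgain-type $|R_j|\ll 2^{(4+\eta)j}N^{-1}$ for any $\eta>0$ via iterated $L^2$ and the majorant) gives $\int_{R_j}|\widehat{f_b}|^\rho \ll N^{\rho-1}2^{-(\rho-4-\eta)j}$, summable in $j$ for $\eta<\rho-4$. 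I expect the main obstacle to be exactly this endpoint: getting the clean $N^{\rho-1}$ without the spurious $\log$, which is precisely why the hypothesis is the strict inequality $\rho>4$ rather than $\rho=4$; the resolution is standard (dyadic pigeonhole on level sets plus the $L^4$ bound, trading the log against the gap $\rho-4>0$), and all the harmonic-analytic inputs needed are already available from Propositions \ref{pseudorandomness} and the major/minor arc lemmas of Section 3.
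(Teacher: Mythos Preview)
Your approach is essentially the same as the paper's: reduce to a level-set bound $\mathrm{meas}(\mathcal{R}_\delta)\ll \delta^{-4-\epsilon_0}N^{-1}$, handle small $\delta$ by a crude $L^4$ estimate with logarithmic loss, and handle large $\delta$ by a $TT^*$/Bourgain argument on $N^{-1}$-spaced points in $\mathcal{R}_\delta$. Two remarks where your write-up diverges from (or is less efficient than) the paper:

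\textbf{(i) The $L^4$ input is much simpler than you make it.} You propose a major/minor arc analysis of $\int|\widehat{\nu_b}|^4$. The paper instead observes (Lemma~\ref{lem 5.3}) that $\int|\widehat{f_b}|^2=\sum_n f_b(n)^2\ll NL^2$ and $\|\widehat{f_b}\|_\infty\le\|f_b\|_1\ll N$, giving $\int|\widehat{f_b}|^4\ll N^3L^2$ in one line. This suffices for the small-$\delta$ range $\delta\le L^{-2/\epsilon_0}$.

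\textbf{(ii) The large-$\delta$ step needs more than the $o(N)$ pseudorandomness.} Your stated majorant control $\widehat{\nu_b}(\alpha)\ll\min(N,\|\alpha\|^{-1})+o(N)$ coming from Proposition~\ref{pseudorandomness} is not by itself sharp enough for the Bourgain argument: after Cauchy--Schwarz and H\"older one needs to bound $\sum_{r,r'}|\widehat{\nu_b}(\theta_r-\theta_{r'})|^\iota$, and for differences landing on a major arc $\mathfrak{M}(q,a)$ with $q$ moderately large the required saving comes from the $q$-decay, not from the $o(N)$ term. The paper isolates this as Lemma~\ref{lem 5.2}, namely $\widehat{\nu_b}(\alpha)\ll q^{\epsilon-1/2}\min\{N,|\alpha-a/q|^{-1}\}$ on $\mathfrak{M}(q,a)$, which follows directly from Lemma~\ref{asymptotic formula on major arcs} together with \eqref{decomposition}--\eqref{decomposition 2}. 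With that in hand the reduction to Bourgain's estimate \cite[Eq.~(4.16)]{Bou} goes through exactly as you indicate. So your outline is correct, but make sure to invoke the refined major-arc pointwise bound rather than only Proposition~\ref{pseudorandomness}.
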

				
				Before proving Proposition \ref{restriction estimate}, we first quickly establish the following results.
				
				\begin{lemma}\label{lem 5.2}
					For $ q\leq L^{\sigma} $, let $ \alpha\in \mathfrak{M}(q,a) $. Then \begin{center}
						$ \widehat{\nu_{b}}(\alpha)\ll q^{\epsilon-1/2}\min\{N,|\alpha-a/q|^{-1}\}$.
					\end{center}
				\end{lemma}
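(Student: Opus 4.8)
The plan is to read the bound off the major-arc asymptotic formula of Lemma~\ref{asymptotic formula on major arcs} together with the factorisation of $S_q^\ast(a,b)$ established in \eqref{decomposition}, \eqref{decomposition 1} and \eqref{decomposition 2}. Writing $\beta=\alpha-a/q$, Lemma~\ref{asymptotic formula on major arcs} gives
\[
\widehat{\nu_b}(\alpha)=\frac{\varphi(W)}{\varphi(Wq)}S_q^\ast(a,b)I(\beta)+O\!\left(Ne^{-C_2\sqrt L}\right),
\]
so it suffices to bound the main term by $q^{\epsilon-1/2}\min\{N,|\beta|^{-1}\}$ and to check that the error term is negligible against this quantity.

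For the main term I factor $q=uv$ with $u$ the $w$-smooth part of $q$ and $(v,W)=1$, just as in the paragraph preceding Lemma~\ref{major arc case}. If $u>1$, then $S_q^\ast(a,b)=0$ by \eqref{decomposition} and \eqref{decomposition 1}, so the main term vanishes and $\widehat{\nu_b}(\alpha)\ll Ne^{-C_2\sqrt L}$. If $u=1$, then $q=v$ is coprime to $W$, so $\varphi(Wq)=\varphi(W)\varphi(q)$, and \eqref{decomposition} together with \eqref{decomposition 2} gives $|S_q^\ast(a,b)|=|S_v^\ast(a_2,b)|\ll_\epsilon q^{1/2+\epsilon}$; combining this with $1/\varphi(q)\ll_\epsilon q^{\epsilon-1}$ yields
\[
\frac{\varphi(W)}{\varphi(Wq)}\,|S_q^\ast(a,b)|\ll_\epsilon q^{2\epsilon-1/2}.
\]
Since $I(\beta)=\int_0^N e(\beta t)\,dt$ satisfies $|I(\beta)|\ll\min\{N,|\beta|^{-1}\}$, the main term is $\ll_\epsilon q^{2\epsilon-1/2}\min\{N,|\beta|^{-1}\}$, and after relabelling $\epsilon$ (the statement holds for every $\epsilon>0$) this is exactly what is wanted.

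It remains to absorb the error term $Ne^{-C_2\sqrt L}$ into $q^{\epsilon-1/2}\min\{N,|\beta|^{-1}\}$. Here I use that $\alpha\in\mathfrak{M}(q,a)$ forces $|\beta|\le L^\sigma(WN)^{-1}$, whence $|\beta|^{-1}\ge WNL^{-\sigma}\ge NL^{-\sigma}$ and therefore $\min\{N,|\beta|^{-1}\}\gg NL^{-\sigma}$; together with $q\le L^\sigma$ (so $q^{\epsilon-1/2}\ge q^{-1/2}\ge L^{-\sigma/2}$) this gives
\[
q^{\epsilon-1/2}\min\{N,|\beta|^{-1}\}\gg NL^{-3\sigma/2},
\]
which dominates $Ne^{-C_2\sqrt L}$ once $n_0$, and hence $L$, is large enough. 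This last estimate is the only place a small computation is needed, and its content is simply that the saving $e^{-C_2\sqrt L}$ beats any fixed power of $L$, so the polynomial losses from $q\le L^\sigma$ and from the major-arc lower bound on $|\beta|^{-1}$ are harmless; everything else is a direct consequence of Lemma~\ref{asymptotic formula on major arcs} and \eqref{decomposition}--\eqref{decomposition 2}.
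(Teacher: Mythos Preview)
Your proof is correct and follows essentially the same approach as the paper: apply Lemma~\ref{asymptotic formula on major arcs}, bound $S_q^\ast(a,b)$ via the factorisation \eqref{decomposition}--\eqref{decomposition 2}, use $|I(\beta)|\ll\min\{N,|\beta|^{-1}\}$, and then absorb the $Ne^{-C_2\sqrt L}$ error using $q\le L^\sigma$ and $|\beta|\le L^\sigma(WN)^{-1}$. The only cosmetic difference is that the paper treats $q=1$ separately (noting $S_1^\ast(a,b)=1$) before invoking \eqref{decomposition}--\eqref{decomposition 2}, whereas you fold it into the $u=1$ case; since the bound $|S_q^\ast(a,b)|\ll q^{1/2+\epsilon}$ is trivial for $q=1$, this causes no issue.
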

				\begin{proof}
					When \( q = 1 \), it is evident that \( 	S_{q}^{\ast}(a,b)  = 1 \).
					
					For $ q>2 $, recall the decomposition of \( q \) in Section 4, where we let \( q = uv \), with \( u \) being \( w \)-smooth and \( (v, W) = 1 \). When \( u = 1 \) and \( u > 1 \), we obtain \( S_{q}^{\ast}(a,b)\ll q^{1/2+\epsilon} \) and \( S_{q}^{\ast}(a,b)=0 \) respectively from (\ref{decomposition}), (\ref{decomposition 1}), (\ref{decomposition 2}). Therefore, \begin{center}
						$ S_{q}^{\ast}(a,b)\ll q^{1/2+\epsilon} $
					\end{center}
					for all $ 1\leq q\leq L^{\sigma} $.
					This, together with Lemma \ref{asymptotic formula on major arcs} and the estimate 
					\begin{center}
						$ I(\beta)\ll \min\{N,\ |\beta|^{-1}\}=\min\{N,\ |\alpha-a/q|^{-1} \} $,
					\end{center}
					yields
					\begin{center}
						$ \begin{aligned}
							\widehat{\nu_{b}}(\alpha)&\ll \varphi(q)^{-1}q^{1/2+\epsilon}\min\{N,\ |\alpha-a/q|^{-1} \}+Ne^{-C_{2}\sqrt{L}}\\
							&\ll  q^{2\epsilon-1/2}\min\{N,|\alpha-a/q|^{-1}\}+Ne^{-C_{2}\sqrt{L}}.
						\end{aligned}
						$
					\end{center}
					By $ q\leq L^{\sigma} $ and $ |\alpha-a/q|\leq L^{\sigma}(WN)^{-1} $, we have 
					\begin{center}
						$ Ne^{-C_{2}\sqrt{L}}\ll q^{2\epsilon-1/2}N $
					\end{center}
					and
					\begin{center}
						$ Ne^{-C_{2}\sqrt{L}}\ll q^{2\epsilon-1/2}|\alpha-a/q|^{-1} $.
					\end{center}
					Therefore, 
					\begin{center}
						$ \widehat{\nu_{b}}(\alpha)\ll q^{2\epsilon-1/2}\min\{N,|\alpha-a/q|^{-1}\} $.
					\end{center}
				\end{proof}
				
				\begin{lemma}\label{lem 5.3}
					For $ b\in [W] $ with $ (b,W)=1 $, we have
					\begin{center}
						$\begin{aligned}
							\int_{\mathbb{T}}|\widehat{f_{b}}(\alpha)|^{4}d\alpha \ll N^{3}L^{2}.
						\end{aligned}  $
					\end{center}
				\end{lemma}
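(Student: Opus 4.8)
The plan is to deduce the bound from the corresponding estimate for $\nu_b$ and then handle $\int_{\mathbb{T}}|\widehat{\nu_b}(\alpha)|^4\,d\alpha$ by a major/minor arc split.

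First I would pass from $f_b$ to $\nu_b$ by expanding the $L^4$-norm as a count of additive quadruples. Since $\widehat{f_b\ast f_b}=\widehat{f_b}^{\,2}$ (convolution on $\mathbb{Z}$), Parseval gives
\begin{equation*}
\int_{\mathbb{T}}|\widehat{f_b}(\alpha)|^{4}\,d\alpha=\sum_{n_1+n_2=n_3+n_4}f_b(n_1)f_b(n_2)f_b(n_3)f_b(n_4),
\end{equation*}
and because $0\le f_b(n)\le\nu_b(n)$ for every $n$, each summand is at most $\nu_b(n_1)\nu_b(n_2)\nu_b(n_3)\nu_b(n_4)$. Hence $\int_{\mathbb{T}}|\widehat{f_b}|^{4}\le\int_{\mathbb{T}}|\widehat{\nu_b}|^{4}$, and it suffices to prove $\int_{\mathbb{T}}|\widehat{\nu_b}(\alpha)|^{4}\,d\alpha\ll N^{3}L^{2}$.

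On the minor arcs I would use the intermediate bounds already obtained in the proof of Lemma \ref{minor arcs}: from $\widehat{\nu_b}(\alpha)\ll NL^{-4\sigma_0+1}+L\sup_t|S(t)|$ together with $S(t)\ll YL^{-\sigma_0-1}W^{-1}$ and $Y\ll WN$, one gets the genuine pointwise estimate $|\widehat{\nu_b}(\alpha)|\ll NL^{-\sigma_0}$ for all $\alpha\in\mathfrak{m}$. Combined with the trivial $L^{2}$-bound $\int_{\mathbb{T}}|\widehat{\nu_b}|^{2}=\sum_n\nu_b(n)^{2}\le L\sum_n\nu_b(n)=(1+o(1))LN$ (using $\nu_b(n)\le L$ and $\mathbb{E}_{n\in[N]}\nu_b(n)=1+o(1)$), this yields
\begin{equation*}
\int_{\mathfrak{m}}|\widehat{\nu_b}(\alpha)|^{4}\,d\alpha\le\Big(\sup_{\alpha\in\mathfrak{m}}|\widehat{\nu_b}(\alpha)|\Big)^{2}\int_{\mathbb{T}}|\widehat{\nu_b}|^{2}\ll N^{2}L^{-2\sigma_0}\cdot NL\ll N^{3},
\end{equation*}
comfortably within the target since $\sigma_0$ is large.

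For the major arcs, on each $\mathfrak{M}(q,a)$ with $1\le q\le L^{\sigma}$ I would invoke Lemma \ref{lem 5.2}, which gives $|\widehat{\nu_b}(\alpha)|\ll q^{\epsilon-1/2}\min\{N,|\alpha-a/q|^{-1}\}$. Writing $\beta=\alpha-a/q$ and splitting the $\beta$-integral at $|\beta|=1/N$,
\begin{equation*}
\int_{\mathfrak{M}(q,a)}|\widehat{\nu_b}(\alpha)|^{4}\,d\alpha\ll q^{4\epsilon-2}\int_{-L^{\sigma}/(WN)}^{L^{\sigma}/(WN)}\min\{N,|\beta|^{-1}\}^{4}\,d\beta\ll q^{4\epsilon-2}N^{3}.
\end{equation*}
Summing over the at most $q$ admissible numerators $a$ and then over $q\le L^{\sigma}$ gives $\int_{\mathfrak{M}}|\widehat{\nu_b}|^{4}\ll N^{3}\sum_{q\le L^{\sigma}}q^{4\epsilon-1}$, which is $\ll N^{3}L$ once $\epsilon$ is fixed small enough (e.g. $\epsilon<1/(4\sigma)$) — already stronger than $N^{3}L^{2}$. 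Adding the minor- and major-arc contributions and recalling the reduction to $\nu_b$ completes the proof. I do not anticipate a real obstacle here; the only points that need care are extracting a true pointwise minor-arc bound from the proof of Lemma \ref{minor arcs} (not merely from its statement) and checking the convergence of the $q$-sum in the major-arc estimate, which holds with room to spare.
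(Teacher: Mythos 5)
Your proof is correct, but it takes a genuinely different and much heavier route than the paper. The paper's own argument is two lines and entirely elementary: by Parseval, $\int_{\mathbb{T}}|\widehat{f_b}|^2=\sum_n f_b(n)^2\ll NL^2$ (since $f_b(n)\leq L$ pointwise and is supported on $[N]$), and then
\begin{equation*}
\int_{\mathbb{T}}|\widehat{f_b}(\alpha)|^4\,d\alpha\leq \|\widehat{f_b}\|_\infty^2\int_{\mathbb{T}}|\widehat{f_b}(\alpha)|^2\,d\alpha\ll N^2\cdot NL^2,
\end{equation*}
using only $\|\widehat{f_b}\|_\infty\leq\|f_b\|_1\leq\|\nu_b\|_1=(1+o(1))N$. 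No circle-method input is needed at all. Your argument instead majorizes the quadruple count by $\nu_b$ (correct, and necessary for your route since the analytic estimates are only available for $\nu_b$), extracts the genuine pointwise minor-arc bound $|\widehat{\nu_b}(\alpha)|\ll NL^{-\sigma_0}$ from the proof of Lemma \ref{minor arcs} (this is legitimate; the paper itself reuses (\ref{minor arc 1}) and (\ref{minor arc 2}) in the same way in the proof of Proposition \ref{restriction estimate}), and integrates the fourth power of the major-arc bound from Lemma \ref{lem 5.2}; all the computations check out, including the $\beta$-integral $\ll N^3$ and the convergent $q$-sum. What your approach buys is a marginally stronger bound ($\ll N^3L$ rather than $N^3L^2$), which is irrelevant here since the lemma is only an input to the $\delta\leq L^{-2/\epsilon_0}$ case of the restriction estimate; what it costs is a dependence on Lemmas \ref{minor arcs}, \ref{asymptotic formula on major arcs} and \ref{lem 5.2}, where the paper's version of this particular lemma is self-contained. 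Both proofs are valid.
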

				\begin{proof}
					Recall that $ L=\log(WN+W) $. We have
					\begin{center}
						$\begin{aligned}
							\int_{\mathbb{T}}|\widehat{f_{b}}(\alpha)|^{2}d\alpha &=\int_{\mathbb{T}}\sum\limits_{n_{1}\in[N]}f_{b}(n_{1})e(n_{1}\alpha)\sum\limits_{n_{2}\in[N]}f_{b}(n_{2})e(-n_{2}\alpha)d\alpha\\
							&=\sum\limits_{n_{1},n_{2}\in[N]}f_{b}(n_{1})f_{b}(n_{2})\int_{\mathbb{T}}e((n_{1}-n_{2})\alpha)d\alpha\\
							&=\sum\limits_{n\in[N]}f_{b}(n)^{2}\\
							&\ll NL^{2}.
						\end{aligned}
						$
					\end{center}
					Since $ ||\widehat{f_{b}}||_{\infty}\leq ||f_{b}||_{1}\leq ||\nu_{b}||_{1}=(1+o(1)) N $, we have 
					\begin{center}
						$ \begin{aligned}
							\int_{\mathbb{T}}|\widehat{f_{b}}(\alpha)|^{4}d\alpha\ll (||\widehat{f_{b}}||_{\infty})^{2}\int_{\mathbb{T}}|\widehat{f_{b}}(\alpha)|^{2}d\alpha\ll N^{3}L^{2}.
						\end{aligned} $
					\end{center}
				\end{proof}
				$ \mathit{Proof \ of \ Proposition \ \ref{restriction estimate}.} $ For $ \delta\in (0,1) $, define\begin{center}
					$ \mathcal{R}_{\delta} =\{\alpha\in \mathbb{T}:|\widehat{f_{b}}(\alpha)|>\delta N \} $.
				\end{center}
				Let $ \epsilon_{0}\in (0,\rho-4) $. It suffices to show that 
				\begin{equation}\label{equ 5.1}
					meas(\mathcal{R}_{\delta})\ll_{\epsilon_{0}}\dfrac{1}{\delta^{4+\epsilon_{0}}N} .
				\end{equation}
				The fact is, when (\ref{equ 5.1}) holds, we have
				\begin{center}
					$ \begin{aligned}
						\int_{\mathbb{T}}|\widehat{f_{b}}(\alpha)|^{\rho}d\alpha
						&\leq \sum\limits_{j\geq 0}\bigg(\dfrac{N}{2^{j-1}}\bigg)^{\rho}meas\bigg\{\alpha\in \mathbb{T}:\dfrac{N}{2^{j}}<|\widehat{f_{b}}(\alpha)|\leq\dfrac{N}{2^{j-1}}\bigg\}\\
						&\ll N^{\rho-1}\sum\limits_{j\geq 0}2^{j(4+\epsilon_{0}-\rho)}\ll N^{\rho-1}.
					\end{aligned} $
				\end{center}
				When $ \delta\leq L^{-2/\epsilon_{0}} $, from Lemma \ref{lem 5.3}, we have 
				\begin{center}
					$\begin{aligned}
						meas(\mathcal{R}_{\delta})\leq (\delta N)^{-4} \int_{\mathbb{T}}|\widehat{f_{b}}(\alpha)|^{4}d\alpha \ll \delta^{-4-\epsilon_{0}}N^{-1}.
					\end{aligned}  $
				\end{center}
				Therefore, we assume $ L^{-2/\epsilon_{0}}<\delta<1 $. Suppose that $\theta_{1},\ldots,\theta_{R}$ are $ N^{-1} $-spaced points in $\mathcal{R}_{\delta}$. It suffices to show that
				\begin{equation}\label{equ 5.3}
					R\ll \delta^{-4-\epsilon_{0}} .
				\end{equation}
				Let $ \iota=2+\epsilon_{0}/3 $. For $ n \in [N] $, let $ a_{n}\in \{0,1\} $ be such that $ f_{b}(n)=a_{n}\nu_{b}(n) $. For $ r \in [R] $, let $ b_{r}\in \mathbb{C} $ be such that $ b_{r}\widehat{f_{b}}(\theta_{r})=|\widehat{f_{b}}(\theta_{r})| $. It is evident that we can assume $ |b_{r}|=1 $ for all $ r \in [R] $. By applying the Cauchy-Schwarz inequality, we have
				\begin{center}
					$ \begin{aligned}
						(\delta NR)^{2}&\leq \bigg( \sum\limits_{r=1}^{R}|\widehat{f_{b}}(\theta_{r})| \bigg)^{2}=\bigg(\sum\limits_{r=1}^{R}b_{r}\sum\limits_{n\in [N]}a_{n}\nu_{b}(n)e(n\theta_{r}) \bigg)^{2}\\
						&\leq \bigg(\sum\limits_{n\in[N]}|a_{n}|^{2}\nu_{b}(n)\bigg)\bigg(\sum\limits_{n\in [N]}\nu_{b}(n)\bigg|\sum\limits_{r=1}^{R}b_{r}e(n\theta_{r})\bigg|^{2}\bigg)\\
						&\ll N\sum\limits_{1\leq r,r^{\prime}\leq R}|\widehat{\nu_{b}}(\theta_{r}-\theta_{r^{\prime}})|. 
					\end{aligned} $
				\end{center}
				On the other hand, by Hölder's inequality, we have
				\begin{center}
					$ \begin{aligned}
						\sum\limits_{1\leq r,r^{\prime}\leq R}|\widehat{\nu_{b}}(\theta_{r}-\theta_{r^{\prime}})|\leq 
						\bigg(\sum\limits_{1\leq r,r^{\prime}\leq R}|\widehat{\nu_{b}}(\theta_{r}-\theta_{r^{\prime}})|^{\iota} \bigg)^{1/\iota}R^{2(1-1/\iota)}.
					\end{aligned} $
				\end{center}
				Therefore, 
				\begin{equation}\label{equ 5.2}
					\begin{aligned} \delta^{2\iota}N^{\iota}R^{2}\ll \sum\limits_{1\leq r,r^{\prime}\leq R}|\widehat{\nu_{b}}(\theta_{r}-\theta_{r^{\prime}})|^{\iota}. \end{aligned} 
				\end{equation}
				By (\ref{minor arc 1}), (\ref{minor arc 2}) and the range of $L$, we know that
				\begin{center}
					$ \begin{aligned}
						\sum\limits_{\substack{1\leq r,r^{\prime}\leq R\\\theta_{r}-\theta_{r^{\prime}}\in \mathfrak{m}} }|\widehat{\nu_{b}}(\theta_{r}-\theta_{r^{\prime}})|^{\iota}\ll R^{2}N^{\iota}L^{-\sigma_{0}\iota}=o(\delta^{2\iota}N^{\iota}R^{2}).
					\end{aligned}$
				\end{center}
				For $ 1\leq q\leq L^{\sigma} $, write $ \mathfrak{M}(q):=\bigcup\limits^{q-1}_{\substack{a=0  \\ (a,q)=1}}\mathfrak{M}(q,a) $. Let $ Q=C+\delta^{-5} $, where $ C $ is a large positive constant. By Lemma \ref{lem 5.2}, we have 
				\begin{center}
					$ \begin{aligned}
						\sum\limits_{\substack{1\leq r,r^{\prime}\leq R\\ \theta_{r}-\theta_{r^{\prime}}\in \mathfrak{M}(q),\ q>Q}}|\widehat{\nu_{b}}(\theta_{r}-\theta_{r^{\prime}})|^{\iota}\ll R^{2}N^{\iota}Q^{\epsilon-\iota/2}.
					\end{aligned} $
				\end{center}
				Note that $C$ is sufficiently large. The right-hand side of the above inequality is negligible compared to the left-hand side of (\ref{equ 5.2}). Therefore, by Lemma \ref{lem 5.2}, we have 
				\begin{center}
					$ \begin{aligned}
						\delta^{2\iota}N^{\iota}R^{2}&\ll \sum\limits_{q\leq Q}\sum\limits_{\substack{0\leq a\leq q-1\\ (a, q)=1}}\sum\limits_{\substack{1\leq r,r^{\prime}\leq R\\ \theta_{r}-\theta_{r^{\prime}}\in \mathfrak{M}(q,a)}}|\widehat{\nu_{b}}(\theta_{r}-\theta_{r^{\prime}})|^{\iota}\\
						&\ll \sum\limits_{q\leq Q}\sum\limits_{\substack{0\leq a\leq q-1\\ (a, q)=1}}\sum\limits_{\substack{1\leq r,r^{\prime}\leq R\\ \theta_{r}-\theta_{r^{\prime}}\in \mathfrak{M}(q,a)}}N^{\iota}q^{\epsilon-\iota/2}\big( \min\{1,( N|\theta_{r}-\theta_{r^{\prime}}-a/q| )^{-1}\} \big)^{\iota}.
					\end{aligned} $
				\end{center}
				It follows that
				\begin{center}
					$ \begin{aligned}
						\delta^{2\iota}R^{2}\ll \sum\limits_{q\leq Q}\sum\limits_{\substack{0\leq a\leq q-1\\ (a, q)=1}}\sum\limits_{\substack{1\leq r,r^{\prime}\leq R\\ \theta_{r}-\theta_{r^{\prime}}\in \mathfrak{M}(q,a)}}q^{\epsilon-\iota/2}\big( \min\{1,( N|\theta_{r}-\theta_{r^{\prime}}-a/q| )^{-1}\} \big)^{\iota}.
					\end{aligned} $
				\end{center}
				When $ 1\leq ( N|\theta_{r}-\theta_{r^{\prime}}-a/q| )^{-1} $, we have
				\begin{center}
					$ 1\ll (1+N|\theta_{r}-\theta_{r^{\prime}}-a/q| )^{-1} $.
				\end{center}
				When $ 1> ( N|\theta_{r}-\theta_{r^{\prime}}-a/q| )^{-1} $, we have
				\begin{center}
					$ ( N|\theta_{r}-\theta_{r^{\prime}}-a/q| )^{-1} \ll (1+N|\theta_{r}-\theta_{r^{\prime}}-a/q| )^{-1} $.
				\end{center}
				Therefore,
				\begin{center}
					$\begin{aligned}
						\delta^{2\iota}R^{2}\ll \sum\limits_{q\leq Q}\sum\limits_{\substack{0\leq a\leq q-1\\ (a, q)=1}}\sum\limits_{\substack{1\leq r,r^{\prime}\leq R\\ \theta_{r}-\theta_{r^{\prime}}\in \mathfrak{M}(q,a)}}\dfrac{q^{\epsilon-\iota/2}}{(1+N|\theta_{r}-\theta_{r^{\prime}}-a/q|)^{\iota}}.
					\end{aligned}  $
				\end{center}
				Let
				\begin{center}
					$ \begin{aligned}
						G(\alpha)=\sum\limits_{q\leq Q}\sum\limits_{0\leq a\leq q-1}\dfrac{q^{\epsilon-\iota/2}}{(1+N|\sin(\alpha-a/q)|)^{\iota}}.
					\end{aligned} $
				\end{center}
				We have
				\begin{center}
					$\begin{aligned}
						\delta^{2\iota}R^{2}\ll \sum\limits_{1\leq r,r^{\prime}\leq R}G(\theta_{r}-\theta_{r^{\prime}}).
					\end{aligned}  $
				\end{center}

				Then following the argument of \cite[Eq.(4.16)]{Bou} but with $ N^{2} $ replaced by $ N $, we obtain the desired bound (\ref{equ 5.3}), thereby concluding the proof of Proposition \ref{restriction estimate}.

				\section{Proof of Theorem 1.1}
				We now apply Salmensuu's transference lemma \cite[Proposition 3.9]{Sal} together with Proposition \ref{mean value estimate}, Proposition \ref{pseudorandomness}, Proposition \ref{restriction estimate} to prove Theorem \ref{thm 1.1}.
				
				$ \mathit{Proof \ of \ Theorem \ \ref{thm 1.1}.} $ Recall that $ n_{0} $ is a sufficiently large even integer. Our goal is to prove that $ n_{0}\in 8A $.
				
				Let $ \epsilon\in (0,1/6) $ be such that $ \delta_{A}>1/2+3\epsilon $. By Proposition \ref{mean value estimate}, we know that there are $ b_{1},\ldots,b_{8}\in [W] $ with $ (b_{i},W)=1 $ for all $ i\in\{1,\ldots,8 \} $ such that $ n_{0}\equiv b_{1}+\cdots+b_{8}\ (\bmod\ W ) $,
				\begin{center}
					$ \mathbb{E}_{n\in[N]} f_{b_{1}}(n)+\cdots+f_{b_{8}}(n)>4(1+\epsilon) $
				\end{center}
				and
				\begin{center}
					$ \mathbb{E}_{n\in[N]} f_{b_{i}}(n)>\epsilon/2 $
				\end{center}
				for all $ i\in\{1,\ldots,8 \} $. Thus, we have obtained the mean condition required for the use of \cite[Proposition 3.9]{Sal}. By Proposition \ref{pseudorandomness}, we know that $ f_{b_{1}},\ldots,f_{b_{8}} $ are $ \eta$-$\mathit{pseudorandom}$ for any $\eta >0$ (see \cite[Definition 3.4]{Sal} for the precise definition of $ \eta$-$\mathit{pseudorandom}$). By Proposition \ref{restriction estimate}, we have for any $ q\in (7,8) $ that $ f_{b_{1}},\ldots,f_{b_{8}} $ are $q$-$\mathit{restricted\ with\ constant}\ K $ for some $ K\geq 1 $ (see \cite[Definition 3.5]{Sal} for the precise definition of $q$-$\mathit{restricted\ with\ constant}\ K $). Note that as $ n_{0} $ tends to infinity, $ N $ also approaches infinity. Let $ \eta $ be sufficiently small in terms of $ \epsilon,K $ and $ q $. By \cite[Proposition 3.9]{Sal}, we know that
				\begin{center}
					$\begin{aligned}
						\sum\limits_{n=n_{1}+\cdots+n_{8}} f_{b_{1}}(n_{1})\cdots f_{b_{8}}(n_{8})>0 
					\end{aligned}
					$
				\end{center}  
				for all $ n\in \big( 4(1-\kappa^{2})N,4(1+\kappa)N \big) $, where $ \kappa=\epsilon/32 $. It follows that
				\begin{center}
					$ Wn+b_{1}+\cdots+b_{8}\in 8A $
				\end{center}
				for all $ n\in \big( 4(1-\kappa^{2})N,4(1+\kappa)N \big) $. Let $ n^{\prime}=(n_{0}-b_{1}-\cdots-b_{8})/W $. Then $ n^{\prime}\in\mathbb{Z} $ and $ n^{\prime}\sim n_{0}/W \sim 4N $. Therefore, $ n^{\prime}\in \big( 4(1-\kappa^{2})N,4(1+\kappa)N \big) $ provided that $ n_{0} $ is sufficiently large in terms of $ \epsilon $. It follows that $ n_{0}\in 8A $.\qed

\end{document}